\newtheorem{theorem}{Theorem}
\newtheorem{proposition}[theorem]{Proposition}
\newtheorem{lemma}[theorem]{Lemma}
\newtheorem{corollary}[theorem]{Corollary}
\theoremstyle{definition}
\newtheorem{remark}[theorem]{Remark}
\newtheorem{definition}[theorem]{Definition}
\numberwithin{equation}{section}
\numberwithin{theorem}{section}
\newcommand{\R}{\mathbb{R}}
\newcommand{\average}{{\mathchoice {\kern1ex\vcenter{\hrule height.4pt
width 6pt depth0pt} \kern-9.7pt} {\kern1ex\vcenter{\hrule
height.4pt width 4.3pt depth0pt} \kern-7pt} {} {} }}
\renewcommand{\tilde}{\widetilde}
\newcommand{\Pc}{\mathcal{P}}
\newcommand{\Lc}{\mathcal{L}}
\newcommand{\Cc}{\mathcal{C}}
\newcommand{\sign}{\text{sign}}
\newcommand{\cara}{\mathds{1}}
\begin{document}

\title[Calibrations and null-Lagrangians for nonlocal perimeters]
{Calibrations and null-Lagrangians for \\ nonlocal perimeters and an application \\ to the viscosity theory}

\begin{abstract}
For nonnegative even kernels $K$, we consider the $K$-nonlocal perimeter functional acting on sets.
Assuming the existence of a foliation of space made of solutions of the associated $K$-nonlocal mean curvature
equation in an open set $\Omega\subset\R^n$, we built a calibration for the nonlocal perimeter inside $\Omega\subset\R^n$. 
The calibrating functional is a nonlocal null-Lagrangian. 
As a consequence, we conclude the minimality in $\Omega$ of each leaf of the foliation.
As an application, we prove the minimality of $K$-nonlocal minimal graphs and that they are the unique minimizers
subject to their own exterior data.

As a second application of the calibration, we give a simple proof of an important result from the seminal paper of 
Caffarelli, Roquejoffre, and Savin, stating that minimizers of the fractional perimeter are viscosity solutions. 
\end{abstract}

\author[Xavier Cabr\'e]{Xavier Cabr\'e}
\address{﻿X.C.\textsuperscript{1,2,3} ---
\textsuperscript{1}ICREA, Pg.\ Lluis Companys 23, 08010 Barcelona, Spain \& 
\textsuperscript{2}Universitat Polit\`ecnica de Catalunya, Departament de Matem\`{a}tiques, 
Diagonal 647, 08028 Barcelona, Spain \& 
\textsuperscript{3}BGSMath, Campus de Bellaterra, Edifici C, 08193 Bellaterra, Spain.
}
\email{xavier.cabre@upc.edu}

\keywords{}

\thanks{X.C. is supported by grants MTM2017-84214-C2-1-P and MdM-2014-0445 (Government of Spain), 
and is a member of the research group 2017SGR1392 (Government of Catalonia).}

\maketitle

\section{Introduction}

Given a measurable function $K$ in $\R^ n$, a bounded open set~$\Omega \subset \R^{n}$, and a measurable set~$E \subset \R^{n}$, 
the \emph{$K$-nonlocal perimeter} of $\Omega$ (in all of $\R^n$) is defined by
$$
\Pc(\Omega):=\Lc(\Omega,\Omega^c),
$$
while the $K$-nonlocal perimeter of~$E$ inside~$\Omega$ is
\begin{equation}\label{Kper}
\Pc_\Omega (E) := \Lc (E \cap \Omega,E^c\cap\Omega) +  \Lc (E \cap \Omega,E^c\cap\Omega^c) + \Lc (E^c\cap\Omega, E \cap \Omega^c),
\end{equation}
where $A^c=\R^n\setminus A$ denotes the complement of a set and
\begin{equation}\label{Lper}
\Lc (A, B) := \int_A \int_B K(x - y) \, dy\, dx
\end{equation}
for any two disjoint measurable sets~$A, B \subset \R^{n}$. For the kernel $K$ we assume that
\begin{equation}\label{kernel}
K(z)\geq 0, \quad K(-z)=K(z),\quad \text{ and } \int_{\R^n} \min(1,|z|) K(z)\, dz <\infty.
\end{equation}
This is an extension of the fractional perimeter introduced by Caffarelli, Roquejoffre, and Savin \cite{CRS10}, 
in which $K(z)=|z|^{-n-\alpha}$ for some $\alpha\in (0,1)$.

If it happens that~$\Pc_\Omega (E) \le \Pc_\Omega (F)$
for every measurable set~$F \subset \R^{n}$ satisfying $F \setminus \Omega = E \setminus \Omega$, we then call~$E$ a~\emph{minimizer} 
of the~$K$-nonlocal perimeter in~$\Omega$.

Given the outside datum $E\setminus\Omega$, if a minimizer~$E$ of the $K$-nonlocal perimeter inside~$\Omega$ exists and if it is regular enough,
then it is easy to verify that it satisfies the Euler-Lagrange equation
\begin{equation} \label{H=0}
H_K[E](x) = 0
\end{equation}
at any point~$x \in  \partial E \cap\Omega$. Here $H_K[E](x)$ denotes the so-called~\emph{$K$-nonlocal mean curvature} of~$E$ at a 
point~$x$ of its boundary, and is defined by
\begin{equation} \label{Hdef}
 \begin{split}
H_K[E](x) & := \int_{\R^{n}} \left( \cara_{E^c}(y) - \cara_{E}(y)\right) K(x - y)\, dy\\
 & \; = \lim_{\varepsilon\downarrow 0} \int_{\R^{n}\setminus B_\varepsilon(x)} \left( \cara_{E^c}(y) - \cara_{E}(y)\right) K(x - y)\, dy,
 \end{split}
\end{equation}
where $\cara$ denotes the characteristic function and the integral is meant in the Cauchy principal value sense ---the second expression
in \eqref{Hdef}--- if $K$ is not integrable about the origin.

We wish to give sufficient conditions for a set with zero $K$-nonlocal mean curvature in $\Omega$ to be a minimizer.
For local elliptic functionals, it is well known that if $\Omega$ is foliated by disjoint hypersurfaces which satisfy, all of them, the
Euler-Lagrange equation, then each of them is a minimizer subject to its own boundary conditions. This is closely related
to the Weierstrass field theory. In fact, given one leaf it suffices to have
sub and super solutions on each side, respectively, of the given leaf ---this is extremely useful for applications.
There is a simple argument based on the strong maximum principle to prove this fact 
whenever an existence theorem of regular enough minimizers is available ---see for instance the 
``Alternative proof of Theorem 1.8'' in the survey paper \cite{CP} by G. Poggesi and the author where this is done 
in the context of classical minimal surfaces. This proof gives in fact more information: the uniqueness of solution 
with boundary datum equal to that of the given leaf. The argument also works in the 
nonlocal setting since the operator $H_K$ enjoys the maximum principle, but it requires to know the existence of a minimizer 
(for given exterior datum) and some regularity for it. Unfortunately,  these tools are not available for many kernels $K$ ---for instance,
if $K\in L^1(\R^n)$ one may not have compactness for sequences of
sets with uniformly bounded $K$-perimeter\footnote{ However, a remarkable existence result of
minimizers for very general kernels has been proved in \cite{CSV16} as a consequence of a delicate 
a priori BV-estimate established in the same paper.}. In addition, the regularity of the minimizers is in
general not available.

It is therefore of interest to extend the theory of  \emph{calibrations}, from the Calculus of Variations,
to the nonlocal setting.
In the local context, the calibration is a functional that can be constructed in the presence of a foliation made of sub and super solutions.
It allows then to prove that the given leaf of the foliation is automatically a minimizer
for its own boundary datum. See for instance \cite[Proof of Theorem 1.8]{CP}, and references within it,
for this program in the context of classical minimal 
surfaces. Once the calibration is available, to conclude minimality there is no need
to have an existence result of minimizers, neither to know their regularity.

In this article we find a calibration for the $K$-nonlocal perimeter in the presence of a foliation made of sub and super solutions 
of \eqref{H=0}. When the foliation is given by solutions, then the calibration functional is a \emph{nonlocal null-Lagrangian}. 
More precisely, note first that the $K$-nonlocal perimeter of a set $F\subset \R^n$ inside $\Omega$ can be rewritten as
\begin{equation*}\label{perime}
\Pc_\Omega (F)= \int_{(\R^n\times\R^n)\setminus(\Omega^c\times\Omega^c)} |\cara_F(x)-\cara_F(y)| \,K(x-y) \,dx\, dy.
\end{equation*}
Given a measurable function $\phi_E$ in $\R^{n}$, we define the calibration, acting on sets $F$ 
satisfying $F\setminus\Omega=E\setminus\Omega$, by
\begin{equation*}\label{cal}
\Cc_\Omega (F) := \int_{(\R^n\times\R^n)\setminus(\Omega^c\times\Omega^c)} \sign(\phi_E(x)-\phi_E(y)) 
\left( \cara_F(x)-\cara_F(y)\right) K(x-y) \,dx\, dy.
\end{equation*}
It will serve to establish that a set $E$ is a minimizer in $\Omega$ whenever there exists a function 
$\phi_{E}$ verifying $E=\{\phi_{E}>0\}$ and whose levels sets are subsolutions 
of the $K$-nonlocal mean curvature equation in $\Omega\cap E$ and supersolutions in $\Omega\setminus E$.
If they are solutions in all of $\Omega$, then the calibration will be a null-Lagrangian, that is, we will have
$$
\Cc_{\Omega}(F)=\Cc_{\Omega}(E) \; \text{ whenever } F\setminus\Omega=E\setminus \Omega.
$$ 
All this is presented in Section~2, where one can find an alternative expression for the calibration:
\begin{equation*}
\Cc_\Omega (F) =  \; 2\int_{F\cap\Omega} H_K[\phi_E](x)\, dx 
+2 \int_{E\setminus\Omega} dx\int_{\Omega}dy \;\, {\rm sign}(\phi_E(x)-\phi_E(y)) K(x-y);
\end{equation*} 
see \eqref{nmcfol} for the definition of the ``level set nonlocal mean curvature'' $H_K[\phi_E](x)$
and note that, interestingly, the last term in the previous equality is independent of the competitor $F$.
In Section~2 we are also concerned with one-sided minimizers, as well as with the following application.

We use the calibration to establish that subgraphs
$$
E = \left\{ (x', x_{n}) \in \R^{n-1} \times \R : x_{n} < u(x') \right\}
$$
having zero $K$-nonlocal mean curvature ---as well as a certain regularity--- are minimizers in
every bounded set of $\R^n$ and, furthermore, are the unique minimizers subject to their own exterior datum. 
This fact had been already proved, using other tools, by Cozzi and Lombardini~\cite{L,CL17}. Their results are for the standard power kernel  
but under more general hypothesis. They also establish a more general result giving an equivalence between different notions
for a graph to be an $\alpha$-nonlocal minimal surface (solutions in the viscosity, pointwise, or weak sense, as well as minimizers).
Also for the standard power kernel, a further development has been done in \cite{CC} by Cozzi and the author, where it is 
established that $\alpha$-nonlocal minimal graphs are always smooth.
Another interesting result is that of Dipierro, Savin, and Valdinoci~\cite{DSV16} where it is proved, also for the power kernel, that
if the exterior datum (outside of a cylinder) is a graph then the minimizer is also a graph inside.

Another motivation for our work comes from singular cones.
Within the theory of nonlocal minimal surfaces, an important open problem concerns the Simons cone in $\R^{n}=\R^{2m}$,
$$
\{ x\in\R^{2m}\, : \, x_1^2+\ldots +x_m^2 < x_{m+1}^2
+\ldots +x_{2m}^2 \}.
$$
It has zero $\alpha$-nonlocal mean curvature for every $m$, but it is not known to be a minimizer
in any even dimension ---it is expected to be a minimizer in dimensions $2m \geq 8$ as in the classical case; see \cite{DDPW14}.
To built a foliation, in certain dimensions, made of sub and super solutions 
of the nonlocal mean curvature equation, and having the Simons cone as one leaf, 
is a very challenging open problem. 
If available, then our result would conclude the minimality of the Simons cone in those dimensions.
See the survey \cite{CP} for a well known foliation in the case of classical minimal surfaces.

Section 3 uses the same method to give a relation for the difference of the $K$-nonlocal perimeter of two ordered sets
$F\subset E$. It will be useful for our application to the viscosity theory, for which we need to consider a generalized notion 
$\tilde H_K$ of $K$-nonlocal mean curvature that is well defined for all sets, even irregular ones. The relation is very simple and reads 
\begin{equation*}
\Pc_\Omega (E) \leq \Pc_\Omega (F)+ \int_{E\setminus F}  \tilde H_K [\phi_E](x)\, dx,
\end{equation*}
where $\phi_{E}$ is any function taking the value $+\infty$ in $F$ and $-\infty$ in $E^{c}$ and where, 
in a smooth setting, we would have
$\tilde H_K [\phi_E](x)=H_K [\{\phi_E>\phi_E(x)\}](x)$. After completing our work, we have been told
that some results closely 
related to this inequality have already appeared in the literature; see Remark~\ref{rk:ineq} for more details.

In Section 4 we are concerned with an important result of Caffarelli, Roquejoffre, and Savin,
Theorem~5.1 of~\cite{CRS10}. It established that minimizers of the fractional perimeter are viscosity solutions. 
We use the calibration to give a simple proof of it, which may be of interest since
their proof was clever but quite involved. In addition, we extend
this viscosity result to general kernels satisfying a power upper bound\footnote{ When $K\in L^1(\R^n)$, 
minimizers satisfy the Euler-Lagrange equation pointwise, as it was
shown by Maz\'on, Rossi, and Toledo~\cite{MRT}.}.

In independent work from ours, Valerio Pagliari \cite{P} finds the analytical properties that  
a ``nonlocal field'' (i.e., a scalar function $\zeta=\zeta(x,y)$ with doubled variables $x$
and $y$) must satisfy to guarantee that a certain new functional is a calibration for a given set. 
In~\cite{P} such a ``nonlocal field'' is built for halfspaces ---a result allowing the author to establish 
that halfspaces are unique minimizers and, from this, an 
interesting $\Gamma$-convergence result concerning the scaling limit of the $K$-nonlocal perimeter functional.
On the other hand, \cite{P} does not built a ``nonlocal field'', nor a calibration, in the presence of a foliation by solutions ---as
done in the current paper. Finally, to relate both articles, it is easy to see that if a foliation as in our Definition~\ref{def:calib}
makes the inequalities \eqref{signs-int} to be equalities, then the ``nonlocal field''
$\zeta (x,y) := \sign (\phi_E(x) -\phi_E(y))$ is a calibration for $\cara_E$ in the sense of Pagliari~\cite{P}.

\section{The calibration. Minimality}

Let us start defining what we mean by a ``foliation'' made of sub and super solutions. The leaves of the foliation
will be the level sets of a certain function $\phi_E$. 

Let $K$ satisfy \eqref{kernel} and $\phi_E :\R^ n\to\R$ be a measurable function.
For $x\in\R^n$, we define
\begin{equation}\label{nmcfol}
 H_{K} [\phi_E](x):=\lim_{\varepsilon\downarrow 0}\int_{\R^n\setminus B_{\varepsilon}(x)}
 \sign(\phi_E(x)-\phi_E(y))\, K(x-y)\, dy
\end{equation}
whenever the limit exists, where we set
\begin{equation*}
\sign (t) =
\begin{cases}
-1 & \quad \mbox{if } t<0,\\
0 & \quad \mbox{if } t=0,\\
1 & \quad \mbox{if } t > 0.
\end{cases}
\end{equation*}

\begin{definition} \label{def:calib}
Given a bounded open set $\Omega\subset\R^n$ and a measurable set $E \subset\R^n$, we say that
``$\,\Omega$ is foliated by sub and super solutions adapted to $E$'' whenever there exists a measurable function
$\phi_E:\R^n\to \R$ such that:
\begin{itemize}
\item[(i)] $E=\{\phi_E>0\}:=\{x\in\R^n : \phi_E(x)>0\}$ up to a zero measure set;
\item[(ii)] the limit  in \eqref{nmcfol} exists for a.e.\ $x\in\Omega$ and the functions of $x\in\Omega$ given by the
integrals in the right-hand side of~\eqref{nmcfol} converge in $L^1(\Omega)$ to $H_K[\phi_E]$, as 
$\varepsilon\downarrow 0$.\footnote{ When $K\not\in L^1(\R^n)$, this hypothesis imposes some
regularity on the level sets of $\phi_E$. It is needed for the
proofs in this section to work. An analogue regularity assumption on the
``nonlocal field'' $\zeta (x,y)$ is also needed in the
article~\cite{P}, even if not explicitely stated there, to
ensure that all quantities in the Proof of Theorem 2 in~\cite{P} ---such as the
second expression for $a(v)$--- are well defined.}
\item[(iii)] it holds that
\begin{equation}\label{signs-int}
H_{K} [\phi_E](x)
\begin{cases}
\leq 0 & \quad \mbox{for a.e.\ } x\in\Omega\cap E,\\
\geq 0 & \quad \mbox{for a.e.\ } x\in\Omega \setminus E.
\end{cases}
\end{equation}
\end{itemize}
\end{definition}

\begin{remark}\label{rk:fat}
Some comments are in order. When 
$\{\phi_E = \phi_E(x)\}$ is a smooth hypersurface separating the $\phi_E(x)$-sub and super level sets of $\phi_E$,
then \eqref{nmcfol} coincides
with the $K$-nonlocal mean curvature  at $x$ of the $\phi_E(x)$-super level set $\{\phi_E>\phi_E(x)\}$, as defined in \eqref{Hdef}.

If all level sets of $\phi_E$ are smooth hypersurfaces in a neighborhood of $\overline\Omega$ and $\Omega$ is also smooth, 
then hypothesis~(ii) will hold ---see Footnote~3 for more comments on hypothesis (ii).

Finally, even if this is not the case in our applications or those that we have in mind, one could wonder
about the situation in which $\phi_E$ had a level set with positive measure (or even with nonempty interior) within $\Omega$.
It is important to notice that, in this degenerate case, \eqref{signs-int} would impose a strong geometric condition.
Indeed, the inequalities \eqref{signs-int}
should also hold at the points of the fat level set ---note that $H_K[\phi_E](x)$ would be surely well defined and finite at interior 
points of the level~set.
\end{remark}

Notice that we can rewrite the $K$-nonlocal perimeter of a set
$F\subset \R^n$ inside $\Omega$, that we assume to be finite, as
\begin{equation}\label{per2}
\Pc_\Omega (F)= \int_{(\R^n\times\R^n)\setminus(\Omega^c\times\Omega^c)} |\cara_F(x)-\cara_F(y)| \,K(x-y) \,dx\, dy <\infty.
\end{equation}
Given the function $\phi_E$ we can now define a new functional, called the calibration and acting on sets $F$ 
satisfying $F\setminus\Omega=E\setminus\Omega$, by
\begin{equation}\label{cal1}
\Cc_\Omega (F) := \int_{(\R^n\times\R^n)\setminus(\Omega^c\times\Omega^c)} \sign(\phi_E(x)-\phi_E(y)) 
\left( \cara_F(x)-\cara_F(y)\right) K(x-y) \,dx\, dy.
\end{equation}
The integrand defines an integrable function thanks to \eqref{per2}.

Note that, as a consequence of Definition \ref{def:calib} (i),
\begin{equation}\label{basic-cal}
 \Pc_\Omega (F)\geq \Cc_\Omega (F) \; \text{ for all sets $F$,}\text{ while}\;\; \Pc_\Omega (E)=\Cc_\Omega (E).
\end{equation}

The following is another useful expression for the calibration. It requires the regularity assumption (ii) in the previous definition
---but not the key hypothesis \eqref{signs-int}.

\begin{proposition}\label{prop:formula}
Let $K$ satisfy \eqref{kernel}. Given a bounded open set $\Omega\subset\R^n$ satisfying $\Pc(\Omega)<\infty$
and a measurable set $E \subset\R^n$, assume that (i) and (ii) in Definition~\ref{def:calib} hold for some
measurable function $\phi_{E}:\R^{n}\to\R$.

Then, for every measurable set such that $F\setminus\Omega=E\setminus\Omega$ and $\Pc_{\Omega}(F)<\infty$, we have
\begin{equation}\label{cal2}
\Cc_\Omega (F) =  \; 2\int_{F\cap\Omega} H_K[\phi_E](x)\, dx 
+2 \int_{E\setminus\Omega} dx\int_{\Omega}dy \;\, {\rm sign}(\phi_E(x)-\phi_E(y)) K(x-y).
\end{equation}
\end{proposition}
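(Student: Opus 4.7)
The plan is to rewrite the integral defining $\Cc_\Omega(F)$ by splitting the domain $(\R^n\times\R^n)\setminus(\Omega^c\times\Omega^c)$ into the three pieces $\Omega\times\Omega$, $\Omega\times\Omega^c$, and $\Omega^c\times\Omega$, and then to exploit the crucial symmetry of the integrand. Indeed, writing $s(x,y):=\sign(\phi_E(x)-\phi_E(y))$, antisymmetry of $s$ and evenness of $K$ give that $s(x,y)\bigl(\cara_F(x)-\cara_F(y)\bigr)K(x-y)$ is invariant under the swap $(x,y)\mapsto(y,x)$. Consequently, on a subset of $\R^n\times\R^n$ symmetric under this swap, the integral equals twice the integral of the asymmetric piece $s(x,y)\cara_F(x)K(x-y)$. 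This will collapse the $\Omega\times\Omega$ part and also fuse $\Omega\times\Omega^c$ with $\Omega^c\times\Omega$, and on the outer-region factor we will replace $\cara_F$ by $\cara_E$ using the boundary condition $F\setminus\Omega = E\setminus\Omega$.

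The first technical issue is that $K$ may be singular at the origin, so the split integrals above are only conditionally convergent (as principal values). I would therefore introduce the truncation $D_\varepsilon:=\{(x,y):|x-y|>\varepsilon\}$ and define the truncated calibration $\Cc^{\varepsilon}_\Omega(F)$ by restricting the integration to $D_\varepsilon\cap((\R^n\times\R^n)\setminus(\Omega^c\times\Omega^c))$. Since $\Pc_\Omega(F)<\infty$ provides the integrable dominator $|\cara_F(x)-\cara_F(y)|\,K(x-y)$, dominated convergence gives $\Cc^{\varepsilon}_\Omega(F)\to\Cc_\Omega(F)$. On $D_\varepsilon$ the kernel is effectively bounded on compacta, so all the split pieces below are absolutely integrable and Fubini applies without qualms.

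On $D_\varepsilon$, the symmetrization argument above yields
\begin{equation*}
\Cc^{\varepsilon}_\Omega(F) = 2\int_{F\cap\Omega}dx\int_{\R^n\setminus B_\varepsilon(x)} s(x,y)K(x-y)\,dy \;-\; 2\int_{\Omega}dx\int_{(E\setminus\Omega)\setminus B_\varepsilon(x)} s(x,y)K(x-y)\,dy,
\end{equation*}
after combining the $\Omega\times\Omega$ term with half of the cross contribution (which provides the missing $y\in\Omega^c$ part, giving the $\R^n$ inner integral). For the first summand, hypothesis (ii) asserts that the truncated integrals converge to $H_K[\phi_E](x)$ uniformly in $x\in\Omega$; since $|\Omega|<\infty$, one may pass to the limit inside $\int_{F\cap\Omega}$. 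For the second summand, the key bound is $\int_\Omega\int_{\Omega^c} K(x-y)\,dy\,dx = \Pc(\Omega)<\infty$, which provides an integrable dominator and allows passage to the limit by dominated convergence.

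Finally, to put the remainder term in the stated form, I would swap the variables $x\leftrightarrow y$ in the leftover integral over $\Omega\times(E\setminus\Omega)$. Using $s(y,x)=-s(x,y)$ and $K(y-x)=K(x-y)$, the minus sign is absorbed and the integral becomes $2\int_{E\setminus\Omega}dx\int_\Omega s(x,y)K(x-y)\,dy$, producing exactly \eqref{cal2}. The main obstacle is the one flagged above, namely justifying the rearrangements despite the singular kernel; this is handled uniformly by the truncation and the two distinct integrability tools (uniform convergence of $H_K^\varepsilon$ on $\Omega$ for the interior term and the finiteness of $\Pc(\Omega)$ for the boundary term).
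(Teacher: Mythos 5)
Your proof is correct and follows essentially the same route as the paper: truncate the kernel to $K_\varepsilon$, pass to the truncation by dominated convergence using $\Pc_\Omega(F)<\infty$, exploit the even/odd symmetry of $K$ and $\sign$ to reduce to the integrand $\sign(\phi_E(x)-\phi_E(y))\cara_F(x)K_\varepsilon(x-y)$, split according to whether $x$ is in $F\cap\Omega$ or in $F\setminus\Omega = E\setminus\Omega$, and then take $\varepsilon\downarrow 0$ using hypothesis (ii) for the interior term and the bound $\Lc(E\setminus\Omega,\Omega)\le\Pc(\Omega)<\infty$ for the exterior term. The only (cosmetic) deviations are that you symmetrize on $\Omega\times\Omega$ and on the cross pieces separately rather than on the whole domain at once, and then perform one extra $x\leftrightarrow y$ swap at the end to recover the stated form of the boundary term.
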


For this expression to hold we emphasize that it is important to have $F\setminus\Omega=E\setminus\Omega$. Note that
the second term in the right hand side of \eqref{cal2} is independent of the competitor $F$.

The previous proposition will be established within the proof of the following theorem, which is our main result.

\begin{theorem} \label{thm:main}
Let $K$ satisfy \eqref{kernel}. Given a bounded open set $\Omega\subset\R^n$ satisfying $\Pc(\Omega)<\infty$
and a measurable set $E \subset\R^n$, assume that
``$\,\Omega$ is foliated by sub and super solutions adapted to $E$'' as in Definition~\ref{def:calib}.
We then have:

{\rm (a)} $E$ is a minimizer of the $K$-nonlocal perimeter in $\Omega$. 

{\rm (b)}  Assume in addition that the kernel $K>0$ in $\R^n$,
the function $\phi_E$ is continuous, $\{\phi_E=0\}\cap\Omega$ has zero measure, and 
both $E\setminus \overline B_R$ and $(\overline E)^c\setminus \overline B_R$ are nonempty for some ball $B_R\supset \Omega$. 
Then, $E$ is the unique minimizer in $\Omega$ subject to its exterior datum.

{\rm (c)} Assume in addition that ``$\,\Omega$ is foliated by solutions adapted to $E$'', i.e., that 
the conditions in  Definition \ref{def:calib} hold and that the inequalities in \eqref{signs-int} are both equalities. Then
the functional $\Cc$ is a nonlocal null-Lagrangian in $\Omega$, that is,
\begin{equation*}\label{nullL}
\Cc_\Omega (F)=\Cc_\Omega (E) \; \text{ for all sets $F$ such that } F\setminus\Omega=E\setminus\Omega.
\end{equation*}
\end{theorem}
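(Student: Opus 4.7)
My plan is to first establish the formula \eqref{cal2} of Proposition~\ref{prop:formula}, since all three parts of Theorem~\ref{thm:main} reduce to short manipulations of \eqref{cal2} combined with the basic inequalities \eqref{basic-cal}. The guiding principle is that $\Cc_\Omega$ is a lower bound for $\Pc_\Omega$ with equality at $E$, so any rewriting of $\Cc_\Omega$ into an exterior-data term plus a sign-definite bulk term will immediately deliver minimality.

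To derive \eqref{cal2}, I would exploit the symmetry of the integrand in \eqref{cal1} under the swap $x\leftrightarrow y$: both $\sign(\phi_E(x)-\phi_E(y))$ and $\cara_F(x)-\cara_F(y)$ are antisymmetric while $K$ is even, so the integrand is invariant, and therefore $\Cc_\Omega(F)$ equals twice the same integral with $\cara_F(x)-\cara_F(y)$ replaced by $\cara_F(x)$ alone. Decomposing the integration domain into the disjoint pieces $\{x\in\Omega\}$ and $\{x\in\Omega^c,\,y\in\Omega\}$, the first piece collapses to $2\int_{F\cap\Omega}H_K[\phi_E](x)\,dx$ after passing the principal-value limit outside the $x$-integral using the uniformity in hypothesis~(ii), while on the second, $\cara_F(x)=\cara_{E\setminus\Omega}(x)$ because $F\setminus\Omega=E\setminus\Omega$, producing the remaining term of \eqref{cal2}.

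Parts (a) and (c) then follow with a one-line computation from \eqref{cal2}. In (a), the second term cancels in the difference and
\[
\Cc_\Omega(F)-\Cc_\Omega(E)=2\int_{(F\setminus E)\cap\Omega}H_K[\phi_E](x)\,dx-2\int_{(E\setminus F)\cap\Omega}H_K[\phi_E](x)\,dx\ge 0
\]
by the sign hypothesis (iii), since the first integration set lies in $\Omega\setminus E$ and the second in $\Omega\cap E$; chaining with \eqref{basic-cal} gives $\Pc_\Omega(F)\ge\Pc_\Omega(E)$. In (c), equalities in (iii) mean $H_K[\phi_E]\equiv 0$ a.e.\ in $\Omega$, which annihilates the first term of \eqref{cal2} and leaves a quantity depending only on $E\setminus\Omega$, i.e., constant on the admissible class.

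The delicate part is (b). If $F$ is another minimizer with the same exterior datum, the full chain $\Pc_\Omega(F)\ge\Cc_\Omega(F)\ge\Cc_\Omega(E)=\Pc_\Omega(E)=\Pc_\Omega(F)$ must consist of equalities, and $\Pc_\Omega(F)=\Cc_\Omega(F)$ forces the pointwise nonnegative quantity $|\cara_F(x)-\cara_F(y)|-\sign(\phi_E(x)-\phi_E(y))(\cara_F(x)-\cara_F(y))$ to vanish a.e.\ on the integration domain; since $K>0$, this yields the strict compatibility $\phi_E(x)>\phi_E(y)$ for a.e.\ pair $(x,y)$ in the domain with $x\in F$ and $y\in F^c$. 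Assume for contradiction that $A:=F\cap E^c\cap\Omega$ has positive measure (the symmetric case $B:=E\cap F^c\cap\Omega$ is handled analogously): the nullity of $\{\phi_E=0\}\cap\Omega$ gives $\phi_E<0$ a.e.\ on $A$. On the other hand, the nonemptiness of both $E\setminus\overline B_R$ and $(\overline E)^c\setminus\overline B_R$, together with the continuity of $\phi_E$, forces $\partial E\cap\Omega^c\ne\emptyset$ via an intermediate-value argument on the connected set $\R^n\setminus\overline B_R\subset\Omega^c$, and hence $\esssup_{E^c\cap\Omega^c}\phi_E=0$. Fubini applied to the compatibility condition with $x\in F\cap\Omega$ and $y$ ranging over $F^c\cap\Omega^c=E^c\cap\Omega^c$ then gives $\phi_E(x)\ge 0$ a.e.\ on $F\cap\Omega$, contradicting $\phi_E<0$ on $A\subset F\cap\Omega$. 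The main obstacle is precisely this closing step: converting the a.e.\ compatibility into a pointwise sign constraint on $\phi_E$, for which both the positivity of $K$ and the boundary sign information carried by the two nonemptiness hypotheses are indispensable.
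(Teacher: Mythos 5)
Your proposal is correct and follows essentially the same route as the paper: derive \eqref{cal2} by symmetrizing in $(x,y)$ after truncating to $K_\varepsilon$, split the domain into $\{x\in\Omega\}$ and $\{x\in\Omega^c,\,y\in\Omega\}$, and then read off (a) and (c) from the sign hypothesis \eqref{signs-int} applied to $F\setminus E\subset\Omega\setminus E$ and $E\setminus F\subset\Omega\cap E$. For (b) you argue by contradiction via $\esssup_{E^c\cap\Omega^c}\phi_E=0$, whereas the paper works directly with the nonempty open sets $\{-\delta<\phi_E<0\}\setminus\overline B_R$ and $\{0<\phi_E<\delta\}\setminus\overline B_R$ and lets $\delta\downarrow 0$; this is a cosmetic difference.

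One small imprecision worth fixing in (b): the fact that $\partial E\cap\Omega^c\neq\emptyset$ does not by itself give $\esssup_{E^c\cap\Omega^c}\phi_E=0$, since a boundary point of $E$ could be isolated in measure inside $E^c$. To close the step, use that $(\overline E)^c\setminus\overline B_R$ is a nonempty open subset of $E^c\cap\Omega^c$ and that (by the same connectedness argument) $\partial\overline E$ meets $(\overline B_R)^c$; by continuity of $\phi_E$, for every $\delta>0$ the set $\{\phi_E>-\delta\}\cap(\overline E)^c\cap(\overline B_R)^c$ is then a nonempty open set, hence of positive measure, which gives $\esssup_{E^c\cap\Omega^c}\phi_E\geq -\delta$ for all $\delta>0$ and thus equality to $0$.
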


For the proof, and also for next sections, it will be useful to introduce the $L^{1}(\R^n)$ kernel
\begin{equation}\label{epsker}
K_\varepsilon(z) := \cara_{\R^{n}\setminus B_{\varepsilon}(0)}(z) K(z), \quad z\in\R^n,
\end{equation}
as well as the $K_{\varepsilon}$-nonlocal mean curvature
\begin{equation}\label{epsnmc}
H_{K_{\varepsilon}} [\phi_E](x):=\int_{\R^n} \sign(\phi_E(x)-\phi_E(y))\, K_{\varepsilon}(x-y)\, dy.
\end{equation}
It converges, by definition, to $H_{K}[\phi_{E}](x)$ as $\varepsilon\downarrow 0$ whenever the limit exists.

\begin{proof}[Proof of Theorem \ref{thm:main}]
(a) Thanks to \eqref{basic-cal}, to verify that $E$ is a minimizer it suffices to show that
\begin{equation}\label{ineq-cal}
\Cc_\Omega (F) \geq \Cc_\Omega (E)  \; \text{ for every set $F$ such that } F\setminus\Omega=E\setminus\Omega
\end{equation}
and with $\Pc_\Omega(F)<\infty$. Hence, recalling \eqref{per2} and \eqref{cal1}, we have
\begin{equation}\label{caleps}
\Cc_\Omega (F) = \lim_{\varepsilon\downarrow 0} \int_{(\R^n\times\R^n)\setminus(\Omega^c\times\Omega^c)} \sign(\phi_E(x)-\phi_E(y)) 
\left( \cara_F(x)-\cara_F(y)\right) K_\varepsilon (x-y) \,dx\, dy
\end{equation}
by dominated convergence.

We first establish expression \eqref{cal2} for the calibration. Note that all the following integrals are convergent since
$K_\varepsilon\in L^ 1(\R^n)$. Since the functions $K_\varepsilon$ and $\sign$ 
are even and odd, respectively, we have that
\begin{equation*}
\begin{split}
& \hspace{-.5cm} \int_{(\R^n\times\R^n)\setminus(\Omega^c\times\Omega^c)} \sign(\phi_E(x)-\phi_E(y)) 
\left( \cara_F(x)-\cara_F(y)\right) K_\varepsilon (x-y) \,dx\, dy\\
& = 2 \int_{(\R^n\times\R^n)\setminus(\Omega^c\times\Omega^c)} \sign(\phi_E(x)-\phi_E(y)) 
\cara_F(x) K_\varepsilon (x-y) \,dx\, dy.
\end{split}
\end{equation*}
We now break this integral over $(\R^n\times\R^n)\setminus(\Omega^c\times\Omega^c)$ into two parts, taking
into account that $x$ must belong to $F$: 
when $x\in F\cap\Omega$ (in which case $y$ runs through all 
of $\R^n$) and when $x\in F\setminus\Omega =  E\setminus\Omega$ (in which case $y$ runs through $\Omega$).
Recalling \eqref{epsnmc}, we get that

\begin{align*}
& \hspace{-.4cm} 2 \int_{(\R^n\times\R^n)\setminus(\Omega^c\times\Omega^c)} \sign(\phi_E(x)-\phi_E(y)) 
\cara_F(x) K_\varepsilon (x-y) \,dx\, dy\\
& =\, 2\int_{F\cap\Omega} dx \int_{\R^n} dy\;\, \sign(\phi_E(x)-\phi_E(y)) \, K_\varepsilon (x-y)\\
& \hspace{.8cm} +2 \int_{E\setminus\Omega} dx\int_{\Omega}dy \;\, \sign(\phi_E(x)-\phi_E(y)) K_\varepsilon (x-y)\\
& =  \; 2\int_{F\cap\Omega} H_{K_\varepsilon}[\phi_E](x)\, dx 
+2 \int_{E\setminus\Omega}\, dx\int_{\Omega}dy \;\, \sign(\phi_E(x)-\phi_E(y)) K_\varepsilon (x-y).
\end{align*}
Note that the integrands in the last integral are dominated by $K(x-y)$, an integrable function over $(E\setminus\Omega)\times\Omega$ 
since $\Lc(E\setminus\Omega,\Omega)\leq \Lc(\Omega^c,\Omega)=\Pc(\Omega) <\infty$.
On the other hand, the first integral in the last
expression has limit as $\varepsilon\downarrow 0$
---since by hypothesis (ii) in Definition~\ref{def:calib}, $H_{K_\varepsilon}[\phi_E]$ converges in $L^1(\Omega)$ to $H_{K}[\phi_E]$.
Therefore, letting $\varepsilon\downarrow 0$, and recalling \eqref{caleps}, we conclude \eqref{cal2}.

Now, with \eqref{cal2} at hand, using that $(F\cap\Omega)\cup(E\setminus F)= (E\cap\Omega)\cup(F\setminus E)$
and that both unions are disjoint, we can express the first integral in \eqref{cal2} as 
\begin{equation}\label{null}
\int_{F\cap\Omega} H_{K}[\phi_E](x)\, dx = \left( \int_{E\cap\Omega} dx +\int_{F\setminus E} dx - \int_{E\setminus F} dx \right) 
H_{K}[\phi_E](x).
\end{equation}
Since both $F\setminus E$ and $E\setminus F$ are contained in $\Omega$, using the hypothesis \eqref{signs-int}
we conclude \eqref{ineq-cal}.

\medskip

(b) To prove uniqueness under the additional assumptions on $K$, $\phi_E$, and $E$, assume that $F$ is another minimizer. 
Then we must have equality in \eqref{basic-cal}.
Hence, looking at \eqref{per2} and \eqref{cal1} we deduce, since $K>0$, that 
\begin{equation}\label{uniq}
\phi_E(x) > \phi_E (y) \quad\text{a.e.\ in } \{(x,y)\in \left( (F\cap\Omega)\times F^c\right) \cup 
\left( F\times (F^c\cap\Omega)\right)\}.
\end{equation}

By hypothesis, $\phi_E$ takes both a positive value and a negative value in $(\overline B_R)^c$. Thus, for every $\delta>0$ small enough, 
both $\{-\delta<\phi_E<0\}\setminus \overline B_R$ and $\{0<\phi_E<\delta\}\setminus \overline B_R$ are nonempty open sets. Hence, 
since  $F^c\setminus \overline B_R=\{\phi_E\leq 0\}\setminus \overline B_R$ and $F\setminus \overline B_R=
\{\phi_E> 0\}\setminus \overline B_R$, from 
\eqref{uniq} we deduce (by letting $\delta\downarrow 0$) that 
$$
\phi_E(x)\geq 0 \;\text{ for  a.e.\ } x\in F\cap\Omega \quad\text{ and } \quad
\phi_E(y)\leq 0 \;\text{ for  a.e.\ } y\in F^c\cap\Omega.
$$
From this, since $\{\phi_E=0\}\cap\Omega$ has zero measure  by hypothesis, we deduce that $F\cap\Omega=\{\phi_E>0\}\cap\Omega
=E\cap\Omega$ up to a measure zero set.

\medskip 

(c) Finally, if the inequalities in \eqref{signs-int} are both equalities, the last two integrals in \eqref{null}
vanish, and thus \eqref{cal2} leads to $\Cc_{\Omega}(F)=\Cc_{\Omega}(E)$.
\end{proof}

As an immediate consequence we obtain the following result on nonlocal minimal graphs.

\begin{corollary}  \label{cor:graphs}
Let $K$ satisfy \eqref{kernel}. Given a bounded open set $\Omega\subset\R^n$ satisfying $\Pc(\Omega)<\infty$, let~$E$ be the subgraph
$$
E = \left\{ (x', x_n) \in \R^{n-1} \times \R : x_{n} < u(x') \right\}
$$
of a continuous function~$u: \R^{n-1} \to \R$. Assume that~$E$ has zero $K$-nonlocal mean curvature at every $x\in\partial E\cap\Omega$, defined by the principal value limit \eqref{Hdef}, which we assume to be uniform in $x\in\partial E\cap\Omega$.

Then, $E$ is a minimizer of the $K$-nonlocal perimeter in $\Omega$. If in addition $K>0$ in $\R^n$, 
then $E$ is the unique minimizer in $\Omega$ subject to its own exterior datum.
\end{corollary}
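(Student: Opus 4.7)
The plan is to apply Theorem~\ref{thm:main} using the foliation of $\R^{n}$ by vertical translates of the graph of~$u$. I would define $\phi_{E}:\R^{n}\to\R$ by $\phi_{E}(x',x_{n}):=u(x')-x_{n}$. This function is continuous since $u$ is, and $\{\phi_{E}>0\}=E$ by construction, which gives hypothesis (i) of Definition~\ref{def:calib}. The level sets $\{\phi_{E}=t\}$ are precisely the graphs of $u-t$, and the superlevel sets $\{\phi_{E}>t\}$ are the vertical translates $E-(0,t)$ of~$E$.

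The heart of the argument is the identification of $H_{K}[\phi_{E}]$ with the $K$-nonlocal mean curvature of $\partial E$ via translation invariance. For $x=(x',x_{n})\in\R^{n}$ and $\xi:=(x',u(x'))\in\partial E$, the change of variables $y=x+z$ in \eqref{nmcfol}, the evenness of $K$, and the pointwise identity
\[
\sign\bigl(u(x')-u(x'+z')+z_{n}\bigr)=\cara_{E^{c}}(\xi+z)-\cara_{E}(\xi+z),
\]
valid off the Lebesgue-null graph $\{\eta_{n}=u(\eta')\}$, followed by the substitution $\eta=\xi+z$, yield
\[
H_{K_{\varepsilon}}[\phi_{E}](x',x_{n})=\int_{\R^{n}\setminus B_{\varepsilon}(\xi)}\bigl(\cara_{E^{c}}(\eta)-\cara_{E}(\eta)\bigr)K(\xi-\eta)\,d\eta,
\]
which is exactly the truncated $K$-nonlocal mean curvature of $E$ at~$\xi$ appearing in \eqref{Hdef}; in particular, the expression is independent of~$x_{n}$. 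Passing $\varepsilon\downarrow 0$ and invoking the hypothesized uniform vanishing of the principal value along $\partial E\cap\Omega$, I obtain $H_{K}[\phi_{E}]\equiv 0$ uniformly on~$\Omega$, verifying both hypothesis (ii) and the equality case of (iii) in Definition~\ref{def:calib}; so $\Omega$ is in fact foliated by \emph{solutions} adapted to $E$.

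With these verifications in hand, Theorem~\ref{thm:main}(a) gives minimality of $E$ in~$\Omega$. For the uniqueness claim when $K>0$ in $\R^{n}$, the extra hypotheses of Theorem~\ref{thm:main}(b) are straightforward: $\phi_{E}$ is continuous by construction; the set $\{\phi_{E}=0\}\cap\Omega$ is contained in the graph of $u$ and is therefore Lebesgue-null by Fubini; and for any ball $B_{R}\supset\Omega$, the vertical line $\{0'\}\times\R$ meets both $E\setminus\overline{B_{R}}$ at $(0',x_{n})$ with $x_{n}$ sufficiently negative and $(\overline{E})^{c}\setminus\overline{B_{R}}$ at $(0',x_{n})$ with $x_{n}$ sufficiently positive, using the finiteness of $u(0')$.

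The main step that requires care is the translation identity above, where one must correctly track the measure-zero exceptional set on which the signum vanishes and transfer the uniform convergence of the principal value from the hypothesis on $\partial E\cap\Omega$ to uniform convergence of $H_{K_{\varepsilon}}[\phi_{E}]$ on~$\Omega$. Once this is in place, the corollary is a direct application of Theorem~\ref{thm:main}(a)--(b).
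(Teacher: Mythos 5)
Your proposal is correct and follows the same approach as the paper: choose $\phi_E(x',x_n)=u(x')-x_n$ and invoke Theorem~\ref{thm:main}(a),(b), using the vertical translation invariance of the level-set nonlocal mean curvature \eqref{nmcfol}. You make explicit the translation identity and the verification of the auxiliary hypotheses for the uniqueness part (b), which the paper's one-line proof leaves to the reader.
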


Note that halfspaces satisfy all the assumptions of the corollary ---since $H_{K_\varepsilon}\equiv 0$ on their boundary, 
for all $\varepsilon>0$. 

See our introduction for more comments on nonlocal minimal graphs.

\begin{proof}[Proof of Corollary \ref{cor:graphs}]
Taking
$$
\phi_E(x):=u(x')-x_n,
$$
we immediately deduce the result from Theorem \ref{thm:main} (a),(b). Note the invariance of \eqref{nmcfol} by vertical translations.
\end{proof}

The last result of this section deals with one-sided minimizers.

\begin{theorem}  \label{thm:oneside}
Assume the same hypotheses of Theorem \ref{thm:main} except for \eqref{signs-int}, in which 
we only assume the first inequality for a.e.\ $x\in \Omega\cap E$. Then $E$ is a one-sided minimizer in $\Omega$ from inside, that is,
\begin{equation}\label{one-sided}
 \Pc_\Omega(E)\leq \Pc_\Omega(F) \quad\text{for all $F$ satisfying } F\subset E \text{ and } F\setminus\Omega=E\setminus\Omega.
\end{equation}

If in addition the first inequality in \eqref{signs-int} is strict, then the inequality in \eqref{one-sided} is also strict,
unless $F=E$ up to a measure zero set.
\end{theorem}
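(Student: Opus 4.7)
The plan is to re-run the calibration computation from the proof of Theorem \ref{thm:main}(a), observing that the restriction $F\subset E$ eliminates precisely the term whose sign required the discarded inequality in \eqref{signs-int} (the supersolution one, on $\Omega\setminus E$).

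First, I would invoke Proposition \ref{prop:formula} for both $\Cc_\Omega(F)$ and $\Cc_\Omega(E)$; note that the proposition only uses hypotheses (i) and (ii) of Definition \ref{def:calib} and \emph{not} the sign conditions \eqref{signs-int}, so it is available here. Taking the difference of the two expressions, the integral over $(E\setminus\Omega)\times\Omega$ depends only on the exterior datum and cancels. Then I would apply the decomposition \eqref{null}. Since $F\subset E$, the set $F\setminus E$ is negligible, and what remains is
$$\Cc_\Omega(F) - \Cc_\Omega(E) = -2\int_{E\setminus F} H_K[\phi_E](x)\, dx.$$
Because $F$ and $E$ agree outside $\Omega$, we have $E\setminus F\subset E\cap\Omega$, so the surviving integrand is controlled by the \emph{one} assumption we kept, namely $H_K[\phi_E]\leq 0$ a.e.\ on $\Omega\cap E$. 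This yields $\Cc_\Omega(F)\geq\Cc_\Omega(E)$, and combined with \eqref{basic-cal} (which gives $\Pc_\Omega(F)\geq\Cc_\Omega(F)$ and $\Pc_\Omega(E)=\Cc_\Omega(E)$) we obtain \eqref{one-sided}.

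For the strict statement, if the first inequality in \eqref{signs-int} is strict and $|E\setminus F|>0$, the identity above shows $\Cc_\Omega(F)-\Cc_\Omega(E)>0$, and therefore $\Pc_\Omega(F)>\Pc_\Omega(E)$. The only alternative is $|E\setminus F|=0$, which together with $F\subset E$ means $F=E$ up to a null set.

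I do not anticipate any real obstacle: all the analytic work (existence of the limit in \eqref{nmcfol}, dominated convergence justifying \eqref{caleps}, boundedness of $H_K[\phi_E]$ on $\Omega$, and the algebraic identity \eqref{null}) is already encapsulated in Proposition \ref{prop:formula} and in the proof of Theorem \ref{thm:main}(a). The only point worth emphasizing in writing is that Proposition \ref{prop:formula} is stated without invoking \eqref{signs-int}, so it can legitimately be reused under the weaker hypotheses of the present theorem.
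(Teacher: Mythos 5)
Your proof is correct and is essentially the same as the paper's: both reuse Proposition \ref{prop:formula} and the decomposition \eqref{null}, observe that $F\subset E$ makes the $F\setminus E$ term vanish, and then close with the single retained sign condition $H_K[\phi_E]\leq 0$ on $\Omega\cap E$, together with \eqref{basic-cal}. The strictness argument also matches.
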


\begin{proof}
The proof is the same as that of Theorem \ref{thm:main} noticing that in \eqref{null} the set $F\setminus E$ is empty. On the other hand,
since $E\setminus F \subset\Omega\cap E$, the last integral over $(E\setminus F)\times\R^n$ is nonpositive (or negative in the last assertion
of the theorem) by hypothesis.
\end{proof}

\section{A relation between the nonlocal perimeter of two ordered sets}

The following result gives a simple inequality for the difference of the $K$-nonlocal perimeters inside $\Omega$ of two ordered
sets $F\subset E$. The idea behind its proof is the same as in the calibration of the previous section. However, the new statement is adapted 
to our application to the viscosity theory in which, as we will see, apriori the foliation could be nonsmooth.
We therefore introduce the following notion of nonlocal mean curvature, which is always well defined (through a $\limsup$), 
even when the foliation is not smooth.

\begin{definition}\label{def:NMCgen}
Let $K$ satisfy \eqref{kernel} and $\phi_E :\R^ n\to [-\infty,+\infty]$ be a measurable function.
For $x\in\R^n$ with $\phi_E(x)\in\R$, we define
\begin{equation}\label{gennmc}
 \tilde H_{K} [\phi_E](x):=\limsup_{\varepsilon\downarrow 0} H_{K_\varepsilon} [\phi_E](x),
\end{equation}
where $K_\varepsilon$ and $H_{K_\varepsilon}$ are given by \eqref{epsker} and \eqref{epsnmc}.
\end{definition}

When $\{\phi_E = \phi_E(x)\}$ is a smooth hypersurface separating the $\phi_E(x)$-sub and super level sets of $\phi_E$,
then \eqref{gennmc} coincides
with the $K$-nonlocal mean curvature  at $x$ of the $\phi_E(x)$-super level set $\{\phi_E>\phi_E(x)\}$ as defined in \eqref{Hdef}.

\begin{theorem}  \label{formula}
Let $K$ satisfy \eqref{kernel} and $\Omega \subset \R^{n}$ be a bounded open set. Let $F$ and $E$
be two measurable sets of $\R^{n}$ such that $F\subset E$ and $F\setminus \Omega = E\setminus \Omega$. 
Given a measurable function $\phi : E\setminus F \to \R$, define
\begin{equation}\label{totalphi}
 \phi_E (x) := 
\begin{cases}
+\infty & \quad \mbox{if } x\in F,\\
\phi(x) & \quad \mbox{if } x\in E\setminus F,\\
-\infty & \quad \mbox{if } x\not\in E.
\end{cases}
\end{equation}
Assume that there exist constants $C_0\in\R$ and $\varepsilon_0>0$ such that
\begin{equation}\label{Hbdd}
H_{K_\varepsilon} [\phi_E](x) \le C_0\qquad\text{for a.e.\ } x\in E\setminus F \text{ and every } 
\varepsilon\in(0,\varepsilon_0).
\end{equation}

Then, if $\Pc_\Omega (F)<+\infty$ we have
\begin{equation}\label{key2}
\Pc_\Omega (E) \leq \Pc_\Omega (F)+ \int_{E\setminus F}  \tilde H_K [\phi_E](x)\, dx <+\infty.
\end{equation}
\end{theorem}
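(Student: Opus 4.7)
My plan is to carry out the calibration argument of Section~2 at the regularized level, working with the $L^1$ kernel $K_\varepsilon$ of \eqref{epsker}, and then pass to the limit $\varepsilon\downarrow 0$. Denote by $\Pc^\varepsilon_\Omega$ and $\Lc^\varepsilon$ the quantities \eqref{Kper} and \eqref{Lper} with $K$ replaced by $K_\varepsilon$. The pivotal identity I would establish is
\begin{equation*}
\Pc^\varepsilon_\Omega(E) - \Pc^\varepsilon_\Omega(F) \;=\; \int_{E\setminus F} H_{K_\varepsilon}[\phi_E](x)\, dx\qquad\text{for every } \varepsilon>0,
\end{equation*}
after which the upper bound \eqref{Hbdd} together with reverse Fatou does the rest.

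To prove this identity, I would first note that $F\subset E$ and $F\setminus\Omega=E\setminus\Omega$ force $E\setminus F\subset\Omega$. Partitioning both $\Omega$ and $\Omega^c$ according to membership in $E$ and $F$, expanding the three $\Lc^\varepsilon$-terms that define $\Pc^\varepsilon_\Omega(E)$ and $\Pc^\varepsilon_\Omega(F)$, and using $\Lc^\varepsilon(A,B)=\Lc^\varepsilon(B,A)$ (from evenness of $K_\varepsilon$), a short bookkeeping leaves
\begin{equation*}
\Pc^\varepsilon_\Omega(E) - \Pc^\varepsilon_\Omega(F) \;=\; \int_{E\setminus F} dx \left(\int_{E^c} K_\varepsilon(x-y)\,dy \;-\; \int_F K_\varepsilon(x-y)\,dy\right).
\end{equation*}
For $x\in E\setminus F$, the definition \eqref{totalphi} of $\phi_E$ forces $\sign(\phi_E(x)-\phi_E(y))=+1$ for $y\in E^c$ and $=-1$ for $y\in F$, so the inner bracket is exactly $H_{K_\varepsilon}[\phi_E](x)$ minus $\int_{E\setminus F}\sign(\phi(x)-\phi(y))K_\varepsilon(x-y)\,dy$. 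Integrating this remainder in $x$ over $E\setminus F$ gives a double integral over $(E\setminus F)^2$ of a function antisymmetric under $x\leftrightarrow y$ (oddness of $\sign$, evenness of $K_\varepsilon$), which vanishes by Fubini. The identity follows.

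With the identity at hand, the bound $H_{K_\varepsilon}[\phi_E]\le C_0$ on $E\setminus F$ for $\varepsilon\in(0,\varepsilon_0)$, together with $|E\setminus F|\le|\Omega|<\infty$, gives $\Pc^\varepsilon_\Omega(E)\le\Pc^\varepsilon_\Omega(F)+C_0|\Omega|$; monotone convergence $K_\varepsilon\uparrow K$ then yields $\Pc_\Omega(E)\le\Pc_\Omega(F)+C_0|\Omega|<\infty$, which already establishes the finiteness in \eqref{key2}. The same monotone convergence gives $\Pc^\varepsilon_\Omega(E)\to\Pc_\Omega(E)$ and $\Pc^\varepsilon_\Omega(F)\to\Pc_\Omega(F)$, so the identity implies $\int_{E\setminus F}H_{K_\varepsilon}[\phi_E]\,dx\to\Pc_\Omega(E)-\Pc_\Omega(F)$ as $\varepsilon\downarrow 0$. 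Applying reverse Fatou to the nonnegative functions $C_0-H_{K_\varepsilon}[\phi_E]$ on the bounded set $E\setminus F$ then produces
\begin{equation*}
\Pc_\Omega(E)-\Pc_\Omega(F) \;\le\; \int_{E\setminus F}\tilde H_K[\phi_E](x)\,dx,
\end{equation*}
which is \eqref{key2}.

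The main obstacle is the last step: interchanging $\limsup$ with the integral. The uniform-in-$\varepsilon$ upper bound \eqref{Hbdd} is precisely what makes reverse Fatou applicable, and without it one could not control possible negative oscillations of $H_{K_\varepsilon}[\phi_E]$ in the limit. The identity at the $\varepsilon$-level is algebraic bookkeeping combined with the ``diagonal'' antisymmetry cancellation over $E\setminus F$, and the finiteness of $\Pc_\Omega(E)$ comes out as a free by-product of the a priori estimate provided by \eqref{Hbdd}.
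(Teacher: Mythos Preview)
Your proof is correct and follows essentially the same approach as the paper: you establish the identity $\Pc^\varepsilon_\Omega(E)-\Pc^\varepsilon_\Omega(F)=\int_{E\setminus F}H_{K_\varepsilon}[\phi_E]\,dx$ by the same decomposition $\Lc^\varepsilon(E\setminus F,E^c)-\Lc^\varepsilon(E\setminus F,F)$ plus the antisymmetric cancellation over $(E\setminus F)^2$, and then pass to the limit via monotone convergence on the perimeters and reverse Fatou on $C_0-H_{K_\varepsilon}[\phi_E]$. The only cosmetic difference is that the paper bounds $\Pc^\varepsilon_\Omega(F)\le\Pc_\Omega(F)$ before taking $\limsup$, whereas you first observe that $\int_{E\setminus F}H_{K_\varepsilon}[\phi_E]\,dx$ actually has a limit; both routes are equivalent here.
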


\begin{remark}\label{rk:ineq}
After completing our work, we have learnt that results closely related to inequality \eqref{key2}
have already appeared, under stronger regularity assumptions, in Proposition 4.25 of 
the work \cite{LMaster} by Lombardini, as well as in Proposition 4.5 of the
article \cite{CMP} by Chambolle, Morini, and Ponsiglione, also under stronger regularity 
assumptions but for a large class of generalized perimeters and curvatures. Both articles
give conditions to guarantee the reversed
inequality in \eqref{key2}, as well as to further guarantee equality in \eqref{key2} ---we are referring here
to the first inequality in~\eqref{key2}. 
They do not treat the validity of inequality \eqref{key2} under weaker 
regularity assumptions as ours.
\end{remark}

Two more comments are in order. First, the $\limsup$ in \eqref{gennmc} is bounded above by a constant since we assume \eqref{Hbdd}, but
it could be equal to $-\infty$ at some points $x\in\R^n$ since we do not assume regularity of the level sets of
$\phi_E$ ---this lack of regularity may happen in the application to the viscosity theory in next section, as it will
be pointed out there\footnote{ However, \eqref{key2} shows that we will necessarily have $\int_{E\setminus F}  \tilde H_K [\phi_E](x)\, dx >-\infty$,
since $\Pc_\Omega(E)\geq 0$ and $\Pc_\Omega(F)<+\infty$.}.

Our second comment is heuristic, but relevant to understand the geometry of the foliation given by the level sets of $\phi_E$
(in this respect, see also the comments made in Remark~\ref{rk:fat}).
Note that the theorem does not assume 
$E=\{\phi_{E}>0\}$, in contrast with the hypothesis in the previous section.
However, since $\phi_E\equiv -\infty$ in $E^c$ we have that $E^c\subset
\{ \phi_E > \phi_E(x)\}^{c}$, and hence points in $E^c$ contribute with a $1$ in the definition of $\tilde H_K [\phi_E](x)$. 
At the same time, we have $\tilde H_K [\phi_E](x)\leq C_0 <+\infty$ by \eqref{Hbdd}. 
Thus, heuristically, if the kernel $K$ is not integrable about the origin, 
we expect the level set of $\phi_E$ passing through $x\in\partial E$ (or the limit of interior level sets) to coincide with $\partial E$
---since it should be tangent to it at all points $x$ in order to make the $\limsup$-principal value in  \eqref{gennmc} less than
$+\infty$.

\begin{proof}[Proof of Theorem \ref{formula}] 
Given $\varepsilon>0$, 
we denote by $\Pc_{\varepsilon,\Omega}$ the $K_\varepsilon$-nonlocal perimeter inside $\Omega$ and by $\Lc_\varepsilon$ the
corresponding interaction functional 
defined by, respectively, \eqref{Kper} and \eqref{Lper} with $K$ replaced by $K_\varepsilon$.

Since $K_\varepsilon\in L^1(\R^n)$ and $E\setminus F\subset\Omega$ is bounded,
all the integrals in the following equalities are convergent. We have
\begin{align}
\Pc_{\varepsilon,\Omega} (E) -\Pc_{\varepsilon,\Omega}(F) &=   \Lc_\varepsilon (E\setminus F, E^c) - \Lc_\varepsilon (E\setminus F, F) \notag\\
& \hspace{-1.5cm} = \int_{E\setminus F} dx \int_{E^c} dy \, K_\varepsilon(x-y) - \int_{E\setminus F} dx \int_{F} dy \, K_\varepsilon(x-y) 
\notag\\
& \hspace{-1.5cm}   =  \int_{E\setminus F} dx \int_{E^c} dy \, K_\varepsilon(x-y) - \int_{E\setminus F} dx \int_{F} dy \, 
  K_\varepsilon(x-y)\notag\\
&  +\int_{E\setminus F} dx\int_{E\setminus F}dy \;\, \sign(\phi_E(x)-\phi_E(y))\, K_\varepsilon(x-y)\notag\\
& \hspace{-1.5cm} = \int_{E\setminus F} dx \left( \int_{E^c} dy + \int_{F} dy +\int_{E\setminus F}dy \right)
\sign(\phi_E(x)-\phi_E(y))\, K_\varepsilon(x-y)\notag\\
& \hspace{-1.5cm} = \int_{E\setminus F} dx \int_{\R^n} dy \,\; \sign(\phi_E(x)-\phi_E(y))\, K_\varepsilon(x-y)\notag\\
& \hspace{-1.5cm}  = \int_{E\setminus F}  H_{K_\varepsilon} [\phi_E](x)\, dx.\notag
\end{align}

We have $\Pc_{\varepsilon,\Omega} (F)\leq \Pc_\Omega(F)<+\infty$. Thus, from the previous equalities we deduce that
$$
\Pc_{\varepsilon,\Omega} (E) \leq \Pc_\Omega(F)
+\int_{E\setminus F}  H_{K_\varepsilon} [\phi_E](x)\, dx.
$$
We now take $\limsup$ in this inequality as $\varepsilon\downarrow 0$. In the right hand side, by \eqref{Hbdd} and Fatou's lemma 
applied to the nonnegative functions
$C_0-H_{K_\varepsilon} [\phi_E](x)$ (recall that $E\setminus F\subset\Omega$ is bounded), we have that
\begin{equation*}
\begin{split}
& \limsup_{\varepsilon\downarrow 0} \int_{E\setminus F}  H_{K_\varepsilon} [\phi_E](x)\, dx \\
& \hspace{1cm}\leq \int_{E\setminus F}  \limsup_{\varepsilon\downarrow 0}  H_{K_\varepsilon} [\phi_E](x)\, dx
=\int_{E\setminus F} \tilde H_{K} [\phi_E](x)\, dx.
\end{split}
\end{equation*}
On the other hand, by monotone convergence the left hand side tends to $\Pc_\Omega (E)$ as $\varepsilon\downarrow 0$, 
and hence we conclude \eqref{key2}. Note that the integral in \eqref{key2} is not $+\infty$ since the integrand is bounded 
by the constant $C_0$, thanks to \eqref{Hbdd}, and $E\setminus F\subset\Omega$ is bounded.
\end{proof}

\section{A simple proof that minimizers are viscosity solutions}

Throughout this section $B_r=B_r(0)\subset\R^{n}$ are balls centered at the origin. 
We use the notation $x=(x',x_n)\in\R^{n-1}\times \R$, $B_r'=\{x'\in\R^{n-1} : |x'|<r\}$, and
$$
Q_r:=B'_{r}\times (-r,r).
$$

We can now state and give a simple proof of Theorem 5.1 in \cite{CRS10}, 
which established that minimizers are viscosity solutions. Our statement, which is the same as that of \cite{CRS10}
but extended to other kernels besides the standard power, concerns
one-sided minimizers from outside (in our terminology) ---they are called supersolutions in \cite{CRS10}.
We use the notation $-e_n=(0,\ldots,0,-1)\in\R^n$.

\begin{theorem} \label{thm:visc}
Assume that $K$ satisfies \eqref{kernel} and $K(z)\leq C |z|^{-n-\alpha}$ for some constants $\alpha\in (0,1)$ and $C$.

Let $\Omega\subset\R^n$ be a bounded open set and $F\subset\R^n$ be a one-sided minimizer in $\Omega$ from outside, that is,
\begin{equation}\label{outside}
 \Pc_\Omega(F)\leq \Pc_\Omega(G) \quad\text{for all $G$ satisfying } F\subset G \text{ and } G\setminus\Omega=F\setminus\Omega.
\end{equation}
Assume in addition that $0\in \partial F$, $B_1(-e_n)\subset F$, and that $|B_r\setminus F|>0$ for all $r>0$ 
sufficiently small\;\footnote{ This last assumption is also made in Theorem 5.1 of \cite{CRS10}, as mentioned
in the beginning of Section~4 of that paper.}.
Then, the $K$-nonlocal mean curvature of $F$ at $0$ is well defined in the principal value sense and satisfies 
\begin{equation}\label{ineqvisc}
H_K[F](0)=\lim_{\varepsilon \downarrow 0} \int_{\R^n\setminus B_\varepsilon} \left(\cara_{F^c} (y)-\cara_{F}(y)\right) K(y)\, dy \geq 0.
\end{equation}

As a consequence, minimizers of the $K$-nonlocal perimeter in $\Omega$ are viscosity solutions in $\Omega$. 
\end{theorem}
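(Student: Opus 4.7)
The plan is to apply Theorem~\ref{formula} to the pair $F\subset E_t:=F\cup B_t$, where $B_t:=B_1(-(1-t)e_n)$ is the inner tangent ball of the hypothesis translated upward by a small $t>0$. Then $0$ lies in the interior of $B_t$ and the cap $C_t:=E_t\setminus F=B_t\setminus F$ is contained in $\Omega$ for $t$ small. As calibrating function on $C_t$ I take the height function $\phi(y):=y_n+1-\sqrt{1-|y'|^2}$, whose level sets in $C_t$ are the upper hemispheres of the translated unit balls $B_1(-(1-s)e_n)$ for $s\in(0,t)$, and extend it to $\phi_E$ via \eqref{totalphi} ($+\infty$ on $F$, $-\infty$ on $E_t^c$). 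The leaves of this foliation are thus uniformly smooth spheres of radius one.

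The first key step is to verify hypothesis~\eqref{Hbdd}. Using that $\{\phi_E>\phi_E(x)\}=F\cup(B_t\setminus B_{\phi(x)})$, a direct calculation yields, for $x$ interior to $C_t$,
\begin{equation*}
H_{K_\varepsilon}[\phi_E](x)=-H_{K_\varepsilon}[B_{\phi(x)}](x)+2\int_{F^c\setminus(B_t\cup B_{\phi(x)})}K_\varepsilon(x-y)\,dy-2\int_{F\cap B_{\phi(x)}}K_\varepsilon(x-y)\,dy.
\end{equation*}
The ball term $-H_{K_\varepsilon}[B_{\phi(x)}](x)$ is uniformly bounded since $B_{\phi(x)}$ has uniformly $C^{1,1}$ boundary and the power bound $K(z)\le C|z|^{-n-\alpha}$ with $\alpha<1$ gives the standard principal-value estimate. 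The two correction integrals are supported on sets bounded away from $x$, and the same power bound controls them uniformly. With \eqref{Hbdd} verified, Theorem~\ref{formula} and the one-sided minimality from outside \eqref{outside} give
\begin{equation*}
0\le\Pc_\Omega(E_t)-\Pc_\Omega(F)\le\int_{C_t}\tilde H_K[\phi_E](x)\,dx.
\end{equation*}

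To extract the pointwise inequality at $0$, I divide by $|C_t|$ and pass to $t\downarrow 0$. For $x$ interior to $C_t$ the PV $\tilde H_K[\phi_E](x)$ exists (since $\{\phi_E>\phi_E(x)\}$ has a smooth spherical boundary at $x$); and as $x\to 0$ with $\phi(x)\to 0$, the ball $B_{\phi(x)}$ collapses to the tangent ball $B_1(-e_n)$, so that an analysis of the correction integrals combined with the identity $H_{K_\varepsilon}[F](0)=H_{K_\varepsilon}[B_1(-e_n)](0)-2\int_{F\setminus B_1(-e_n)}K_\varepsilon(y)\,dy$ identifies the averaged limit with $H_K[F](0)$. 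This simultaneously establishes the existence of the principal-value limit $H_K[F](0)$ and the desired bound $H_K[F](0)\ge 0$. The viscosity-solution statement for two-sided minimizers then follows by applying the same argument to tangent balls touching $\partial F$ from either side.

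The principal obstacle I foresee is verifying \eqref{Hbdd} uniformly up to the outer sphere $\partial B_t$, where the correction $\int_{F^c\setminus(B_t\cup B_{\phi(x)})}K_\varepsilon$ would naively diverge as $\dist(x,\partial B_t)\to 0$; a careful geometric estimate, possibly exploiting the ``almost everywhere'' clause in \eqref{Hbdd} or a cancellation with the ball term, will be required. A related subtlety is coordinating the limits $t\downarrow 0$ and $\varepsilon\downarrow 0$ (the latter hidden in $\tilde H_K$) so that the principal-value limit $H_K[F](0)$ is shown to exist from the averaged inequality rather than being assumed.
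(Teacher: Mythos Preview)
Your strategy differs from the paper's, and the obstacle you flag at the end is a genuine gap rather than a technical nuisance.

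The paper does not argue by averaging. It first proves an auxiliary contradiction statement (Proposition~\ref{prop:visc}): if a smooth set $A\subset F$ touches $\partial F$ from inside at $0$ as a graph $\{x_n<u(x')\}$ and $H_K[A](0)<0$, one raises the graph vertically, $A_t=A\cup\{u(x')\le x_n<u(x')+t\}$, sets $E=A_{t_0}\cup F$, and applies Theorem~\ref{formula} to $F\subset E$. For $x\in E\setminus F$ with $t=x_n-u(x')$ one has the pointwise comparison $\mathrm{sign}(\phi_E(x)-\phi_E(y))\le \cara_{(A_t)^c}(y)-\cara_{A_t}(y)$, hence $H_{K_\varepsilon}[\phi_E](x)\le H_{K_\varepsilon}[A_t](x)<0$ uniformly, by continuity (Lemma~\ref{rk:reg}). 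The crucial feature is that each leaf $\partial A_t$ bounds a \emph{full} set $A_t$ extending well beyond the cap, so the superlevel set $\{\phi_E>\phi_E(x)\}\supset A_t$ is never thin near~$x$.

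Your spherical foliation lacks this. For $x\in C_t$ at distance $\delta$ from $\partial B_t$, the superlevel set $\{\phi_E>\phi_E(x)\}$ near $x$ is only the lune between $\partial B_{\phi(x)}$ and $\partial B_t$, of thickness $\sim\delta$; the set $F$ lies at distance $\gtrsim t-\delta$. Integrating over the annulus $\delta<|y-x|<t$ thus contributes $\sim\delta^{-\alpha}$, and $H_{K_\varepsilon}[\phi_E](x)\to+\infty$ as $\delta\to 0$. Neither the ball term nor the ``a.e.''\ clause in \eqref{Hbdd} absorbs this, since the bound must be uniform. One could try to weaken \eqref{Hbdd} to an integrable majorant (the divergence is of order $\delta^{-\alpha}$, which is integrable), but you would still face the second obstacle: extracting from $|C_t|^{-1}\int_{C_t}\tilde H_K[\phi_E]\ge 0$ both the \emph{existence} of the principal value $H_K[F](0)$ and its sign. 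The paper handles existence by a separate step: it sets $A_\varepsilon=B_{1/2}(-e_n/2)\cup(F\setminus B_\varepsilon)$, obtains $H_K[A_\varepsilon](0)\ge 0$ from Proposition~\ref{prop:visc} after rescaling, and then passes to the limit via a Caffarelli--Silvestre monotonicity argument on $g_\varepsilon=\cara_{A_\varepsilon^c}-\cara_{A_\varepsilon}+\cara_{B_{1/2}(-e_n/2)}-\cara_{B_{1/2}(e_n/2)}$, whose positive and negative parts are separately integrable with monotone limits.
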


The proof in \cite{CRS10} establishes that  \eqref{ineqvisc} is nonnegative when replacing the limit on it by a $\liminf$,
but not that the limit exists (i.e., that the principal value is well defined). Thus, we will include the proof of this fact below.

The main difficulty in the proof of Theorem~\ref{thm:visc} is to establish the following result, 
a typical viscosity statement.

\begin{proposition}\label{prop:visc}
 Under the hypothesis on the kernel $K$ of Theorem \ref{thm:visc}, 
assume \eqref{outside}, i.e., that $F$ is a one-sided minimizer in $\Omega$ from outside.

Suppose that $0\in \partial F$, $|B_r\setminus F|>0$ for all $r>0$ 
sufficiently small, $A\subset F$, and 
\begin{equation}\label{bdry}
 A \cap Q_2 = \{ x\in Q_2 :  x_n < u(x') \}
\end{equation}
for some smooth function $u: \overline{B'_2}\subset\R^{n-1}\to\R$ with $u(0)=0$ and $\nabla u(0)=0$. 
Assume also that for each $\rho\in (0,1)$ there exists a constant $\nu>0$ such that
\begin{equation}\label{dist}
{\rm dist} \left ( (x',u(x')), F^c \right) > \nu >0 \quad\text{ if } \rho\leq |x'|\leq 1.
\end{equation}

Then, $H_K[A](0) \geq 0$.
\end{proposition}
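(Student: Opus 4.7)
The plan is to argue by a calibration-style perturbation. For each small $t>0$, I would construct a competitor $G_t\supset F$ by lifting $A$ vertically by $t$: set $G_t:=F\cup A_t$ where $A_t:=A+te_n$, so that $V_t:=G_t\setminus F=A_t\setminus F$. The hypothesis \eqref{dist}, together with $A\subset F$, ensures that for $t<\nu$ the set $V_t$ is empty on $\{\rho\le|y'|\le 1\}$, so after a standard localization $V_t$ sits in a thin slab above the graph of $u$ around the origin and is contained in $\Omega$. On $V_t$ I would define the foliation $\phi(y):=u(y')+t-y_n$, and extend it to $\phi_{G_t}$ as in Theorem \ref{formula} ($+\infty$ on $F$, $-\infty$ on $G_t^c$). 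The subgraph form $A\cap Q_2=\{y_n<u(y')\}$ gives, for $y\in V_t$ with $s(y):=y_n-u(y')\in(0,t)$, the super-level set identification $\{\phi_{G_t}>\phi(y)\}=F\cup A_{s(y)}$, with $y\in\partial A_{s(y)}$.

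The key estimate then comes from the decomposition $\cara_{F\cup A_{s(y)}}-\cara_{A_{s(y)}}=\cara_{F\setminus A_{s(y)}}$, which, since $F\cup A_{s(y)}\supset A_{s(y)}$, yields
\begin{equation*}
H_{K_\varepsilon}[\phi_{G_t}](y)=H_{K_\varepsilon}[A_{s(y)}](y)-2\int_{F\setminus A_{s(y)}}K_\varepsilon(y-z)\,dz\le H_{K_\varepsilon}[A_{s(y)}](y).
\end{equation*}
Because $\partial A_{s(y)}$ is a smooth graph near $y$, the power bound $K(z)\le C|z|^{-n-\alpha}$ yields a uniform-in-$\varepsilon$ bound on $H_{K_\varepsilon}[A_{s(y)}](y)$ (verifying \eqref{Hbdd}), and the principal value exists and equals $H_K[A_{s(y)}](y)=H_K[A](y',u(y'))$ by the vertical translation invariance of \eqref{nmcfol}. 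Continuity of $H_K[A]$ at the origin (itself a consequence of the $C^{1,\alpha}$ character of $\partial A$ at $0$ and the power bound on $K$), together with the fact that $V_t$ shrinks to $\{0\}$ as $t\downarrow 0$, therefore gives
\begin{equation*}
\tilde H_K[\phi_{G_t}](y)\le H_K[A](0)+o(1)\quad\text{uniformly on }V_t.
\end{equation*}

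To conclude, Theorem \ref{formula} applied with $F\subset G_t$, combined with the one-sided minimality hypothesis \eqref{outside}, produces
\begin{equation*}
0\le\Pc_\Omega(G_t)-\Pc_\Omega(F)\le\int_{V_t}\tilde H_K[\phi_{G_t}](y)\,dy\le\bigl(H_K[A](0)+o(1)\bigr)|V_t|.
\end{equation*}
The assumption $|B_r\setminus F|>0$ for small $r$, together with $u(0)=\nabla u(0)=0$, ensures $|V_t|>0$ for every small $t>0$: any $y\in B_r\setminus F$ with $r\ll t$ satisfies $y_n<r<u(y')+t$ and hence lies in $A_t$. Dividing by $|V_t|$ and letting $t\downarrow 0$ then yields $H_K[A](0)\ge 0$. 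The hardest point I expect is the localization: one must both ensure $V_t\subset\Omega$ so that \eqref{outside} applies, and guarantee that the super-level sets of $\phi_{G_t}$ coincide with the \emph{global} translates $F\cup A_{s(y)}$ rather than with cutoff versions (which would introduce an uncontrolled far-field term of the form $2\int_{A\cap Q_R^c}K$). Both issues are handled by exploiting \eqref{dist} to confine $V_t$ to a small neighborhood of the origin as soon as $t<\nu$.
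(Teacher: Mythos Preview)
Your overall strategy---foliate a thin region above the graph of $u$ by the vertically lifted leaves $\{x_n=u(x')+s\}$ and apply Theorem~\ref{formula}---is exactly the paper's. The direct argument (divide by $|V_t|$ and send $t\downarrow 0$) in place of the paper's contradiction is a cosmetic difference.

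There is, however, a genuine gap in your localization. You set $A_t:=A+te_n$, a \emph{global} vertical translate. Hypothesis~\eqref{dist} only controls the annulus $\{\rho\le|x'|\le 1\}$; nothing in the statement says anything about $A$ (or about $F$) for $|x'|>1$ or outside $Q_2$. Thus $(A+te_n)\setminus F$ can be nonempty and unbounded in $\{|x'|>1\}$, so in general $V_t\not\subset\Omega$ and $G_t\setminus\Omega\neq F\setminus\Omega$, which blocks the use of~\eqref{outside}. The same problem breaks the identification $\{\phi_{G_t}>\phi(y)\}=F\cup A_{s(y)}$ and the inclusion $A_{s(y)}\subset G_t$ outside $Q_2$, so the inequality $H_{K_\varepsilon}[\phi_{G_t}](y)\le H_{K_\varepsilon}[A_{s(y)}](y)$ is not justified there either. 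The phrase ``standard localization'' does not resolve this: any cutoff of $A_t$ destroys the exact translation identity $H_K[A_{s(y)}](y)=H_K[A](y',u(y'))$ that your argument relies on.

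The paper handles this by defining instead
\[
A_t:=A\cup\{x:|x'|<1,\ u(x')\le x_n<u(x')+t\},
\]
so that $A_t=A\subset F$ for $|x'|\ge 1$; then \eqref{dist} takes care of $\rho\le|x'|<1$ and one gets $E_t\setminus Q_\rho=F\setminus Q_\rho$ exactly. The cost is that $A_t$ is no longer a translate of $A$, so your exact identity must be replaced by a continuity statement: the paper proves (Lemma~\ref{rk:reg}) that for families $D_t$ with $D_t\cap Q_1=D_0\cap Q_1$ and $\cara_{D_t}\to\cara_{D_0}$ in $L^1_{\rm loc}$, the map $(t,x)\mapsto H_{K_\varepsilon}[D_t](x)$ is continuous at $(0,0)$ uniformly in $\varepsilon$. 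Applying this to $D_t:=A_t-te_n$ gives $H_{K_\varepsilon}[A_t](x)<0$ on $\partial A_t\cap Q_\rho$ for $\rho,t,\varepsilon$ small, which is the input your Theorem~\ref{formula} step needs. Once you make this modification, the rest of your argument goes through.
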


Note that here $H_K[A](0)$ is well defined since $A$ is smooth in a neighborhood of the origin.

With this result at hand, we con now give the

\begin{proof}[Proof of Theorem \ref{thm:visc}]
Given $\varepsilon\in (0,1)$, we consider the set
\begin{equation}\label{Feps}
A_\varepsilon:= B_{1/2} (- e_n/2) \cup \left( F\setminus B_\varepsilon  \right).
\end{equation}
Note that $0\in\partial A_\varepsilon$, $A_\varepsilon\subset F$, and $A_\varepsilon \setminus B_\varepsilon = F\setminus B_\varepsilon $. 
Since $\partial A_\varepsilon$ is smooth in a neighborhood of $0$,  $H_K [A_\varepsilon](0)$ is well defined in the principal value sense.

After a dilation, we can transform the sets $F$, $A_\varepsilon$, and an appropriate small
cylinder $Q_{c\varepsilon}$ centered at $0$, to become respectively three sets $F$, $A$, and $Q_2$ 
satisfying the hypotheses of Proposition~\ref{prop:visc}. Notice that we may assume \eqref{dist} since
the tangent ball $B_{1/2} (- e_n/2)$ in \eqref{Feps} separates
strictly from $B_1 (-e_n)$ away from the origin, while still $B_1 (-e_n)\subset F$.
We therefore have, by the above proposition,
$$
H_K [A_\varepsilon](0)\geq 0 \quad \text{ for each }\varepsilon>0.
$$

From this information we are going to conclude that $H_K [F](0)$ is well defined and nonnegative. For this, we adapt to our perimeter setting
the ideas for integro-differential operators of Caffarelli and Silvestre, Lemma~3.3 of \cite{CS09}.
Since $A_\varepsilon$ is nondecreasing as $\varepsilon$ decreases to $0$, the function
$$
f_\varepsilon := \cara_{(A_\varepsilon)^c} -  \cara_{A_\varepsilon}
$$
is nonincreasing as $\varepsilon\downarrow 0$. The same monotonicity holds for the function
$$
g_\varepsilon := \cara_{(A_\varepsilon)^c} -  \cara_{A_\varepsilon} +\cara_{B_{1/2} (- e_n/2)} -\cara_{B_{1/2} (e_n/2)},
$$
which will be used next. We also notice that $g_\varepsilon\leq 0$ in $B_{1/2} (-e_n/2)\cup B_{1/2} (e_n/2)$.
Thus, given $0<\delta<\varepsilon <1$, we have (note that all integrands are integrable since we remove a ball of radius $\delta$ about
the origin)
\begin{align}
\int_{\R^n\setminus B_\delta} f_\varepsilon (y) K(y)\, dy &=  \int_{\R^n\setminus B_\delta} g_\varepsilon (y) K(y)\, 
dy\label{eq1}\\
& \hspace{-3cm} =  \int_{\R^n\setminus \left (B_\delta \cup B_{1/2} (-e_n/2)\cup B_{1/2} (e_n/2)\right)} (g_\varepsilon)^+ (y)K(y)\, dy - 
\int_{\R^n\setminus B_\delta} (g_\varepsilon)^-(y)K(y)\, dy.\label{eq2}
\end{align}
We know that \eqref{eq1} tends to $H_K [A_\varepsilon](0)\geq 0$ as $\delta\downarrow 0$. On the other hand, since the first integral in
\eqref{eq2} is bounded above by 
$$
2  \int_{\R^n\setminus\left (B_{1/2} (-e_n/2)\cup B_{1/2} (e_n/2)\right) } K(y)\, dy<\infty
$$
---that this integral is finite follows from an explicit computation using the power upper bound for $K$---
it is also convergent as 
$\delta\downarrow 0$. We conclude that also the second integral in \eqref{eq2} is monotonically convergent 
to a finite value, as $\delta\downarrow 0$, and that
\begin{align}
0 & \leq  H_K [A_\varepsilon](0)= \int_{\R^n} f_\varepsilon (y) K(y)\, dy =  \int_{\R^n} g_\varepsilon (y) K(y)\, 
dy\label{eq3}\\
&  =  \int_{\R^n\setminus \left (B_{1/2} (-e_n/2)\cup B_{1/2} (e_n/2)\right)} (g_\varepsilon)^+ (y)K(y)\, dy - 
\int_{\R^n} (g_\varepsilon)^-(y)K(y)\, dy.\notag
\end{align}

Thus,
\begin{align}
\int_{\R^n} (g_\varepsilon)^-(y)K(y)\, dy& \leq \int_{\R^n\setminus 
\left (B_{1/2} (-e_n/2)\cup B_{1/2} (e_n/2)\right)} (g_\varepsilon)^+ (y)K(y)\, dy 
\notag\\
& \leq 2  \int_{\R^n\setminus\left (B_{1/2} (-e_n/2)\cup B_{1/2} (e_n/2)\right) } K(y)\, dy<\infty \notag
\end{align}
and,  as $\varepsilon\downarrow 0$, we have that $(g_\varepsilon)^+$ is nonincreasing, 
$(g_\varepsilon)^-$ is nondecreasing,
and $g_\varepsilon \to  g:=\cara_{F^c} -  \cara_{F} +\cara_{B_{1/2} (- e_n/2)} -\cara_{B_{1/2} (e_n/2)}$ a.e.\ in $\R^{n}$.
By monotone convergence we conclude that $g^+$ and $g^-$ are integrable in $\R^n$ and, from \eqref{eq3}, that $\int_{\R^n}g(y)K(y)\,dy\geq 0$.
But then
$$
\int_{\R^n\setminus B_r} g(y)K(y)\, dy
=\int_{\R^n\setminus B_r} \left( \cara_{F^c}(y) -  \cara_{F}(y)\right) K(y)\, dy
$$
has limit as $r\downarrow 0$ and the limit is nonnegative, as claimed.
\end{proof}

To prove the proposition establishing the viscosity statement, we will need the following observation.

\begin{lemma}\label{rk:reg}
Assume that $K$ satisfies \eqref{kernel} and $K(z)\leq C |z|^{-n-\alpha}$ for some constants $\alpha\in (0,1)$ and $C$.
Let $D_t$ be a family of measurable sets in $\R^n$ such that 
\begin{equation}\label{setseq}
D_t\cap Q_1 = D_0\cap Q_1 \qquad\text{for all } t\in [0,t_0]. 
\end{equation}
Assume that $D_0\cap Q_1$ is a $C^{1,\beta}$ open set for some $\beta>\alpha$, $0\in\partial D_0$, and such that
$\cara_{D_t}\to \cara_{D_0}$ in $L^ 1_{\rm loc} (\R^n)$ as $t\downarrow 0$. 

Then, the $K$-nonlocal mean curvature
$H_K[D_t](x)$, as a function of $(t,x)\in [0,t_0]\times (\partial D_t\cap Q_1)$, is continuous at $(t,x)=(0,0)$.
Furthermore, continuity at $(0,0)$ also holds for the function $H_{K_\varepsilon}[D_t](x)$ appearing in the principal value definition of
$H_K[D_t](x)$, with a uniform modulus of continuity for $\varepsilon$ small.
\end{lemma}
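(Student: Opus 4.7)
The plan is to use a standard near/far splitting argument. Fix a small $r>0$. For $(t,x)$ in a neighborhood of $(0,0)$ such that $B_r(x)\subset Q_1$, I would write
\[
H_K[D_t](x) = I^{\rm near}_r(t,x) + I^{\rm far}_r(t,x),
\]
where $I^{\rm near}_r$ is the principal-value integral over $B_r(x)$ and $I^{\rm far}_r$ is the ordinary integral over its complement. Since $D_t\cap Q_1 = D_0\cap Q_1$, the near integral depends only on $D_0$ (not on $t$) and is entirely determined by the local behavior of $\partial D_0$ near $x$. The same decomposition applies to $H_{K_\varepsilon}[D_t](x)$: the near integral becomes ordinary (over $B_r(x)\setminus B_\varepsilon(x)$), and as soon as $\varepsilon<r$ the far piece coincides with the one for $H_K$.

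For the near part, I would exploit the $C^{1,\beta}$ regularity of $\partial D_0\cap Q_1$. After a rotation depending on $x$, parametrize $\partial D_0$ near $x$ as the graph $y_n - x_n = u_x(y'-x')$ with $u_x(0)=0$, $\nabla u_x(0)=0$, and $|u_x(z')|\leq M|z'|^{1+\beta}$, the constant $M$ being uniform in $x$ close to $0$. By evenness of $K$ and antisymmetry of the sign function under the reflection $y\mapsto 2x-y$, the PV integral of $(\cara_{P^c}-\cara_P)K(x-\cdot)$ over $B_r(x)$ vanishes for the half-space $P$ bounded by the tangent plane at $x$. Hence $|I^{\rm near}_r(x)|$ is bounded by twice the $K$-mass of the symmetric difference between $D_0$ and $P$ inside $B_r(x)$, which sits in the slab $\{|y_n-x_n|\leq M|y'-x'|^{1+\beta}\}$. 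Using $K(z)\leq C|z|^{-n-\alpha}$, a direct Fubini computation gives
\[
|I^{\rm near}_r(x)| \leq C\int_{B'_r} |z'|^{1+\beta-n-\alpha}\,dz' = \frac{C\,r^{\beta-\alpha}}{\beta-\alpha},
\]
which tends to $0$ as $r\downarrow 0$ precisely because $\beta>\alpha$. Since $K_\varepsilon\leq K$, the same bound controls the $H_{K_\varepsilon}$ near part uniformly in $\varepsilon$.

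For the far part, note that $K(x-\cdot)\cara_{\R^n\setminus B_r(x)}$ is in $L^1(\R^n)$ with norm uniformly bounded in $x$ near $0$, thanks to $\int \min(1,|z|)K(z)\,dz<\infty$. A further split of the integration domain into a large ball $B_R$ and its complement reduces convergence to two pieces: inside $B_R$, the hypothesis $\cara_{D_t}\to\cara_{D_0}$ in $L^1_{\rm loc}(\R^n)$ combined with continuity of translations in $L^1$ gives $L^1(B_R)$-convergence of the integrand, hence convergence of its integral against the bounded weight $K(x-\cdot)\leq Cr^{-n-\alpha}$ (where bounded here means essentially bounded once $|x-y|\geq r$); outside $B_R$, the $K$-mass is small, uniformly in $(t,x)$. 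Choosing $R$ large and then $(t,x)$ close to $(0,0)$, the far part becomes continuous at $(0,0)$.

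Combining these two estimates via the standard $\eta/3$ argument yields continuity of $H_K[D_t](x)$ at $(0,0)$, and since the near bound is independent of $\varepsilon$ and the far parts coincide for $\varepsilon<r$, the same modulus of continuity governs $H_{K_\varepsilon}$ for all small $\varepsilon$. The main obstacle I expect is the careful bookkeeping in the near part: verifying that the $C^{1,\beta}$ graph description is uniform in the base point $x$ and extracting the tangent-plane cancellation rigorously (including the contribution of $B_r(x)\setminus B_\varepsilon(x)$ in the $K_\varepsilon$ case). Once that estimate is in hand, the far-part continuity and the $\eta/3$ combination are routine.
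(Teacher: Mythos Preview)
Your proposal is correct and follows the same overall strategy as the paper: split $H_K[D_t](x)$ into a near part (where $D_t\cap Q_1=D_0\cap Q_1$ makes the dependence on $t$ disappear) and a far part (handled by the $L^1_{\rm loc}$ convergence together with integrability of $K$ away from the origin). The one substantive difference is in the treatment of the near part. The paper uses a \emph{fixed} local region (a translate of $Q_{1/2}$), observes that the near integral equals the nonlocal mean curvature of the fixed set $D_0$ for the truncated kernel $\cara_{Q_{1/2}}K$, and then cites Proposition~6.3 of \cite{FFMMM15} for its continuity in the boundary point. You instead take a \emph{shrinking} ball $B_r(x)$ and prove directly, via tangent-plane cancellation and the $C^{1,\beta}$ graph bound, that the near contribution is $O(r^{\beta-\alpha})$ uniformly in $x$; this is essentially the computation underlying the cited result, so your argument is more self-contained at the cost of the bookkeeping you already flagged (uniformity of the $C^{1,\beta}$ graph representation in the base point). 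Either route gives the uniform-in-$\varepsilon$ statement for $H_{K_\varepsilon}$ by the same reasoning you describe.
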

 
 As we will see in the proof, the statement could be made more general, replacing the identity  \eqref{setseq} by
 regular enough dependence of $D_{t}$ in $t$ within $Q_{1}$.
 
\begin{proof}
One breaks the integral defining $H_K[D_t](x)$ as follows:
\begin{align*}
H_K[D_t](x) & =\int_{\R^n} \left( \cara_{(D_t)^c}(y)-\cara_{D_t}(y)\right) \cara_{(Q_{1/2})^c}(x-y) K(x-y) \, dy\\
& \hspace{1cm} + \int_{\R^n} \left( \cara_{(D_t)^c}(x+z)-\cara_{D_t}(x+z)\right) \cara_{Q_{1/2}}(z) K(z) \, dz.
\end{align*}
We can restrict the first integral to a compact set since the integral of $K$ near infinity is as small as we wish. Since $x-y\in (Q_{1/2})^{c}$, the kernel is bounded in the compact set intersected with the complement of the cube. Thus, the hypothesis $\cara_{D_t}\to \cara_{D_0}$ in $L^ 1_{\rm loc} (\R^n)$
gives the continuity of the integral as $(t,x)\to(0,0)$. The same statements hold for the kernels $K_{\varepsilon}$, uniformly in $\varepsilon$.

Regarding the second integral, taking $x\in Q_{1/2}$ we have that all the integrands $\cara_{(D_t)^c}(x+z)-
\cara_{D_t}(x+z)=\cara_{(D_0)^c}(x+z)-\cara_{D_0}(x+z)$ coincide. Therefore we are now dealing with the nonlocal mean curvature 
of the set $D_0$ for the kernel $\cara_{Q_{1/2}} K$. 
Its continuity as a function of the boundary point is proved with all details in Proposition~6.3 of \cite{FFMMM15} 
under the only hypothesis on the power upper bound for the kernel, which is satisfied by $\cara_{Q_{1/2}} K$.
The proof in \cite{FFMMM15} also establishes the continuity of $H_{K_\varepsilon}[D_t](x)$ uniformly in $\varepsilon$.
Note that the $C^1$ assumption on $K$ (away from the origin) made in  \cite{FFMMM15} is not used in their proof
when establishing the continuity of the nonlocal mean curvature\footnote{ This continuity result has also been proved in other articles, but
we find the proof in \cite{FFMMM15} the most natural and direct ---it contains, though, a couple of ``easy-to-correct'' typos.}.
\end{proof}

We can now proceed to prove the viscosity property using the calibration functional, more precisely, 
using Theorem~\ref{formula}.

\begin{proof}[Proof of Proposition \ref{prop:visc}]
We argue by contradiction and assume that
\begin{equation}\label{contr}
H_K [A](0) < 0.
\end{equation}
The idea of the proof is to raise the graph of $u$ in the vertical direction ---just slightly to preserve the
exterior datum $F$ outside of a neighborhood of the origin, thanks to \eqref{dist}--- to produce a local foliation of the complement of $F$,
around the origin, 
with leaves having negative nonlocal mean curvature. Theorem~\ref{formula} will then give that the set below the last leaf 
(completed with $F$ so that the new set $E$ is larger than $F$ as required in Theorem~\ref{formula}) has
smaller $K$-nonlocal perimeter than $F$, a contradiction with $F$ being a minimizer from outside. Since $F$ may not be
smooth, we are looking at the foliation in an irregular set $E\setminus F$ and this why we need the generalized notion of nonlocal mean curvature from the previous section. 
However, adding $F$ to the smooth leaves raised from the graph of $u$ will only help to
make the nonlocal mean curvature even more negative.

More precisely, for $t\in [0,1]$, we define
\begin{equation*}\label{St}
A_t := A\cup \{ x\in \R^n : |x'|< 1, u(x')\leq x_n < u(x') +t \}.
\end{equation*}
We will always take $\rho\in (0,1/2)$ small enough to have $u<\rho/3$ in $B_\rho'$, and also $0\leq t \leq \rho/3$. 
Thanks to this and since
we have the equality \eqref{bdry} in $Q_2$ while $A_t$ only differs from~$A$ within $\{|x'|<1\}$, it is easy to check that
\begin{equation}\label{boundary}
\partial A_t\cap Q_{\rho}=\{(x',u(x')+t) : |x'|<\rho\}.
\end{equation}
On the other hand, \eqref{contr} and Lemma~\ref{rk:reg} give that, for $\rho$, $t$, and $\varepsilon$ small enough,
\begin{equation}\label{all-signs}
H_{K_\varepsilon} [A_t](x) < 0 \quad \text{and}\quad H_K [A_t](x) < 0 \quad\text{ for all } x\in \partial A_t\cap Q_{\rho}.
\end{equation}
To verify this, translate $A_t$ in the vertical direction so that all points on $\partial A_t\cap Q_{\rho}$ lie in the same
surface $\{x_n=u(x')\}$. Then $D_t:=A_t-te_n$ fits with the setting of Lemma~\ref{rk:reg}, since
$D_t=A_t-te_n=(A\cap Q_{1})\cup((A-te_{n})\cap (Q_{1})^{c})$.
Recall now that $\cara_{A-t e_n}\to\cara_A$ in $L^1_{\rm loc}(\R^n)$ as $t\to 0$ for each measurable set~$A$,
as shown by approximating in $L^{1}$ the characteristic function by continuous ones.

It is crucial for the sequel that, for $0<t<\nu$, we have
$$
A_t\setminus Q_\rho \subset F,
$$ 
since  $A\subset F$, $u(x')+t<2\rho/3<\rho$ in $Q_{\rho}$, and \eqref{dist}. 
Taking $t\in[0,t_0]$ and $\rho$, both small enough to guarantee all the previous statements, we define
\begin{equation}\label{defEt}
E_t := A_t\cup F \quad\text{ and }\quad E:=E_{t_0}.
\end{equation}
Therefore, we have
\begin{equation}\label{ending}
F\subset E_t \quad \text{and}\quad E_t\setminus Q_\rho =F\setminus Q_\rho,
\end{equation}
which fits with the setting of Theorem \ref{formula} by taking $\Omega=Q_\rho$ there. 
With the notation of the theorem, we define
$\phi(x)= t_0+u(x')-x_n$ for $x\in E\setminus F\subset Q_\rho$ and consider the function $\phi_E$ defined by \eqref{totalphi} 
(recall that we take $E=E_{t_0}$).

Note here that in general $E$ will not be smooth, since we are adding $F$ to the smooth set $A_{t_{0}}$
and $F$ is apriori an irregular set. This
is why in the previous section we had to introduce a generalized notion of nonlocal mean curvature and adapt the calibration to it.

Now, given $x\in E\setminus F$ we have, in particular, that $x\in (A_{t_0}\setminus A)\cap Q_\rho$
(by \eqref{defEt}, \eqref{ending}, and $A\subset F$). 
Letting $t=t(x):= t_0-\phi_E(x)=x_n-u(x')\in [0,t_0)$, we claim that
\begin{equation}\label{lastclaim}
 x\in\partial A_t\cap Q_\rho \; \text{ and }\;  \sign \left(\phi_E(x)-\phi_E(y)\right) \leq \cara_{(A_t)^c}(y)-\cara_{A_t}(y)
 \text{ for all } y\in \R^ n.
\end{equation}
Indeed, that $x\in\partial A_t$ is clear by \eqref{boundary}. We next check the claimed inequality.
From the definition of $\phi_E$, there is nothing to check when $y\in F$ ---recall that $\phi_{E}(x)=\phi(x)\in\R$. When $y\in E^c$ the inequality becomes an equality 
since $E^c\subset (E_t)^c\subset (A_t)^c$. Finally, for $y\in E\setminus F\subset (A_{t_0}\setminus A)\cap Q_\rho$, there is
nothing to check if $\phi_E(x)-\phi_E(y)<0$. Instead, if 
$0\leq \phi_E(x)-\phi_E(y)=y_n-u(y')-t$
then $y \in (A_t)^c$ and thus the inequality in \eqref{lastclaim} holds again.
Therefore, recalling the definitions \eqref{epsnmc} and  \eqref{gennmc}, from \eqref{lastclaim} and \eqref{all-signs} we deduce that
\begin{equation}\label{finaleps}
H_{K_\varepsilon}[\phi_E](x)\leq H_{K_\varepsilon}[A_t](x)<0
\end{equation}
and
\begin{equation}\label{final}
\tilde H_{K}[\phi_E](x)\leq H_{K}[A_t](x)<0.
\end{equation}

Thus, these inequalities hold for every $\varepsilon$ sufficiently small and $x\in E\setminus F$, with $t=t_0-\phi_E(x)$.  
In particular, by \eqref{finaleps}, the hypothesis \eqref{Hbdd} in Theorem~\ref{formula} is satisfied with $C_0=0$.
Note that since $F$ is a one-sided minimizer in $\Omega=Q_{\rho}$ from outside, we necessarily have 
$\Pc_{\Omega}(F)\leq \Pc_{\Omega}(F\cup \Omega)\leq \Lc (\Omega,\Omega^{c})
= \Lc (Q_{\rho}, (Q_{\rho})^{c})<+\infty$.
Thus, Theorem~\ref{formula} and \eqref{final} give
$$
\Pc_\Omega (E) \leq \Pc_\Omega (F)+ \int_{E\setminus F}  \tilde H_K [\phi_E](x)\, dx <\Pc_\Omega (F)
$$
provided that $|E\setminus F|>0$. This is a contradiction with $F$ being a one-sided minimizer in $\Omega$ from outside. 
Finally, note that $|E\setminus F|>0$ holds thanks to the assumption $|B_r\setminus F|>0$, for $r$ small enough, made in the proposition. 
Indeed, if $r$ is sufficiently small we have that $B_r\subset A_{t_0}$ and hence
$B_r\setminus F\subset E\setminus F$.
\end{proof}

\section*{Acknowledgments}

\noindent
The author thanks Joaquim Serra for several interesting conversations on the topic and for 
suggestions after reading a first version of the paper. 
He also thanks the referee for very interesting comments and for bringing to our attention
the related works \cite{CMP,LMaster}.

\end{document}